\numberwithin{equation}{section}
\newtheorem{theorem}{Theorem}[section]
\theoremstyle{definition}
\newtheorem{example}[theorem]{Example}
\newtheorem{algorithm}[theorem]{Algorithm}
\newtheorem{proposition}[theorem]{Proposition}
\newtheorem{lemma}[theorem]{Lemma}
\newtheorem{corollary}[theorem]{Corollary}
\newtheorem{remark}[theorem]{Remark}
\newtheorem{definition}[theorem]{Definition}
\newcommand{\trace}{\mathrm{Tr}}
\newcommand{\HH}{\mathbf{H}}
\newcommand{\CC}{\mathbf{C}}
\newcommand{\ZZ}{\mathbf{Z}}
\newcommand{\RR}{\mathbf{R}}
\newcommand{\QQ}{\mathbf{Q}}
\newcommand{\QQhat}{\widehat{\QQ}}
\newcommand{\ZZhat}{\widehat{\ZZ}}
\newcommand{\FF}{\mathbf{F}}
\newcommand{\Gal}{\mathrm{Gal}}
\newcommand{\ag}{\mathfrak{a}}
\newcommand{\bg}{\mathfrak{b}}
\newcommand{\GL}{\mathrm{GL}}
\newcommand{\ord}{\mathrm{ord}}
\renewcommand{\AA}{\mathbf{A}}
\newcommand{\Gm}{\mathbf{G}_{\mathrm{m}}}
\newcommand{\GSpspecific}[1]{\mathrm{GSp}_{#1}}
\newcommand{\GSp}{\GSpspecific{2g}}
\newcommand{\Spspecific}[1]{\mathrm{Sp}_{#1}}
\newcommand{\Sp}{\Spspecific{2g}}
\newcommand{\FcalN}{\mathcal{F}_N}
\newcommand{\Fcalall}{\mathcal{F}_\infty}
\newcommand{\Fcal}[1]{\mathcal{F}_{#1}}
\newcommand{\Stab}{\mathrm{Stab}}
\newcommand{\units}{\times}
\newcommand{\End}{\mathrm{End}}
\newcommand{\SL}{\mathrm{SL}}
\newcommand{\matrixring}[2]{#2^{#1\times #1}}
\newcommand{\NPhirO}{N_{\reflextype,\mathcal{O}}}
\newcommand{\matrixofShimurapdfstring}[1]{the matrix}
\newcommand{\matrixofShimura}[1]{\epsilon(N_{\reflextype}(#1))}
\newcommand{\IKr}[1]{I(#1)}
\newcommand{\PKr}[1]{P(#1)}
\newcommand{\cmgp}[1]{H_{\Phi,\mathcal{O}}(#1)}
\newcommand{\cmext}[1]{\mathcal{H}(#1)}
\newcommand{\reflextype}{\Phi^{\mathrm{r}}}
\newcommand{\reflexfield}{K^{\mathrm{r}}}
\newcommand{\realreflex}{\reflexfield_0}
\newcommand{\reflexunits}{K^{\mathrm{r}\units}}
\newcommand{\reflexab}{K^{\mathrm{r}}_{\mathrm{ab}}}
\newcommand{\reflexid}{K^{\mathrm{r}\units}_{\AA}}
\newcommand{\Kid}{K^\times_\AA}
\renewcommand{\mod}{\ \mathrm{mod}\ }
\newcommand{\modstar}{\ \mathrm{mod}^\units\ }
\newcommand{\tbt}[1]{\left(\begin{array}{cc} #1 \end{array}\right)}
\newcommand{\tbtinl}[4]{\begin{psmallmatrix} #1 & #2 \\ #3 & #4\end{psmallmatrix}}
\newcommand{\OKr}{\mathcal{O}_{\reflexfield}}
\newcommand{\transpose}{^{\mathrm{t}}}
\newcommand{\wascalledg}{r}
\newcommand{\wascalledS}{T}
\newcommand{\leftmult}[3]{[#1]^{#2}_{#3}}
\newcommand{\leftchange}[2]{\leftmult{1}{#1}{#2}}
\newcommand{\wasepsilon}[1]{\leftmult{#1}{B}{B}}
\title[Explict Shimura reciprocity for Siegel modular functions]{An explicit version of Shimura's reciprocity law for Siegel modular functions}
\author{Marco Streng}
\thanks{Mathematisch Instituut, Universiteit Leiden,
P.O.~box 9512,
2300 RA Leiden,
The Netherlands. Email: \url{streng@math.leidenuniv.nl}.
During parts of the period in which this work was done, the author
was supported by EPSRC grant number
EP/G004870/1 and NWO Vernieuwingsimpuls. The author would like to thank Jared Asuncion,
Gaetan Bisson, Andreas Enge, Jean-Pierre Flori, Math\'e Hertogh, Marc Masdeu, Peter Stevenhagen, and Tonghai Yang for many useful
comments for the improvement of the exposition and the software.}
\begin{document}

 \begin{abstract}
	\noindent We give an explicit version of Shimura's
	reciprocity law for singular values
	of Siegel modular functions.
	We use this to construct the first examples of
	class invariants of quartic CM fields
	that are smaller than Igusa invariants.
	Our version also enabled a new proof of Shimura's
	reciprocity law by Tonghai Yang.
\end{abstract}
 
\maketitle

 \section{Introduction}

The values of the
modular function $j$
in imaginary quadratic numbers~$\tau$
generate abelian extensions
of imaginary quadratic fields~$K=\QQ(\tau)$.
These values $j(\tau)$ enable explicit computation
of the Hilbert class field of~$K$
and of
elliptic curves over finite fields
with a prescribed number of points
(the ``CM method'')
for primality testing and cryptography.

However, these algebraic numbers $j(\tau)$
have very large height, which limits
their usefulness in such applications.
So we consider
other modular functions~$f$ instead,
whose values are again abelian over~$K$,
hoping to find numbers of smaller height.
If these values $f(\tau)$ lie in the
same field as $j(\tau)$,
then we call them \emph{class invariants},
and they can
take the place of $j(\tau)$ in applications,
which leads to great speed-ups~\cite{enge-cm}.

The values $f(\tau)$
are acted upon by ideals (and id\`eles) of~$K$
via the Artin isomorphism.
Shimura's reciprocity law expresses this action
in terms of an action on the modular
functions~$f$ themselves,
and an explicit version of this reciprocity
law~\cite{MR563924,gee-stevenhagen}
allows one to search for class invariants in a systematic way.

There exists a higher-dimensional CM method,
with applications in hyperelliptic curve cryptography
and a more general analytic construction of class fields~\cite{spallek,HEHCC}.
A significant speedup will be obtained by replacing 
the \emph{Igusa invariants} in this construction
by \emph{smaller} class invariants.

Shimura gave various higher-dimensional analogues of his reciprocity
law~\cite{shimura-models-I, shimura-models-II, shimura-arithmetic, shimura-fourier, shimura-theta-cm, shimura-reciprocity-theta}.
Our main result (Theorems \ref{thm:general}, \ref{thm:idealgroup}, and~\ref{thm:special} below)
is a new and explicit version, suitable
for finding class invariants in the higher-dimensional setting.

We use our explicit formulation of Shimura's reciprocity law
to find the first examples of small class
invariants of quartic CM fields
(Section~\ref{sec:examples}).
Our formulation of Shimura's reciprocity law also 
inspired a new proof of Shimura's reciprocity law by
Tonghai Yang~\cite[Section~4, see also Acknowledgements]{yang-shimura}.
As a third application, Andreas Enge and the author~\cite{enge-streng}
use the explicit reciprocity law
for generalizing Schertz's work on class invariants~\cite{schertz}
to higher dimension.

\subsection{Summary of results}

Let $\mathcal{F}_N$ be the field of Siegel modular functions
of level $N$ over $\QQ(\zeta_N)$ (c.f.~\eqref{eq:FN}).
Let $f\in\mathcal{F}_N$ be such a function.
Let $\tau\in\CC^{g\times g}$ be a symmetric matrix with positive
definite imaginary part (that is, a point in the Siegel upper half space $\HH_g$).
If $\tau$ is a primitive CM point (Section~\ref{ssec:cm}),
then $f(\tau)$ is an algebraic number
and is in fact abelian over a field known as the \emph{reflex
field} $\reflexfield$ of $\tau$ (Section~\ref{sec:reflex}).

Now given $\sigma\in\mathrm{Gal}(\overline{\QQ}/\reflexfield)^{\mathrm{ab}}$,
there are various reasons why we 
would like to be able to compute $f(\tau)^\sigma$.
For example, it allows us to decide whether $f(\tau)$
is in certain subfields of $\overline{\QQ}$ and
to find its minimal polynomial over~$\reflexfield$.
This minimal polynomial can be used to speed up
explicit class field theory and explicit
CM constructions of curves and Jacobians
\cite{enge-morain, sutherlandclassinv}.

Shimura's reciprocity law~\cite{shimura-models-I, shimura-models-II, shimura-arithmetic, shimura-fourier, shimura-theta-cm, shimura-reciprocity-theta}
expresses $f(\tau)^\sigma$
in the form $F(\tau)$ where $F$ is obtained
from $f$ and $\sigma$.
The function $F$ is obtained in terms
of an action of an uncountable ad{\`e}lic group,
which is not very helpful in computation.
So in order to use such actions, one needs to
approximate the ad{\`e}lic group elements by products
of elements in particular subgroups.
We did this, and the result is an explicit reciprocity law
in terms of ideals and ray class groups, rather than
id{\`e}le class groups.

Let $\mathfrak{a}$ be an ideal.
Then Theorem~\ref{thm:general} gives (in terms of $\mathfrak{a}$ and~$\tau$)
efficiently computable
$U\in \GSp(\ZZ/N\ZZ)$ and $\tau'\in\HH_g$
with
\begin{equation}\label{eq:action}
f(\tau)^{[\mathfrak{a}]} = f^U(\tau').
\end{equation}

In turn, the action of $U$
on $\mathcal{F}_N$ can be computed
in one of the various practical ways explained in Section~\ref{sec:actioncompute}.
Moreover, we can make sure
that $\tau'$ is in a fundamental region (Section~\ref{sec:algperiodmatrices}),
allowing
for efficient numerical evaluation of $f^U(\tau')$.

We use this reciprocity law to prove (Theorem~\ref{thm:idealgroup})
a formula for the ideal group corresponding to the abelian extension
\[\cmext{N} = \reflexfield(f(\tau) : f\in\FcalN)\qquad\mbox{of}\qquad \reflexfield.\]

Computations with $f(\tau)$ become even more efficient when it is
real instead of complex. 
Proposition~\ref{prop:complexconjugation}
gives a sufficient condition
for this to happen.

The author has
implemented the actions
in SageMath~\cite{sage}
(which uses PARI~\cite{pari})
and made
the program available online at~\cite{cmcode}.

\subsection{Overview of content}

Section~\ref{sec:statement} states the results
and Sections
\ref{sec:ad}--\ref{sec:conjugationproof} contain a proof.
The action of~$U$ in~\eqref{eq:action}
becomes most explicit when expressing the function~$f$
in terms of theta constants, see Section~\ref{sec:theta}.

Section~\ref{sec:examples} gives a detailed example of how to obtain useful class invariants.

Finally, Section~\ref{sec:applications}
gives applications to computational
class field theory and to the construction
of curves over finite fields.
The final three sections (\ref{sec:theta}--\ref{sec:applications})
can be read
independently of Sections \ref{sec:ad}--\ref{sec:conjugationproof}.

 \section{Definitions and statement of the main results}\label{sec:statement}

\subsection{The upper half space} \label{ssec:results1}

 Fix a positive integer~$g$.
 The \emph{Siegel upper half space}
 $\HH_g$ is the set of~$g\times g$ symmetric
 complex 
 matrices with positive definite imaginary part.
 It parametrizes $g$-dimensional principally
 polarized abelian varieties $A$ over $\CC$
 together with a \emph{symplectic} basis $b_1,\ldots,b_{2g}$
 of their first homology.

In more detail, every abelian variety over $\CC$ is of the
form $A = \CC^g / \Lambda$
for a lattice $\Lambda$ of rank~$2g$.
A polarization is given by 
a Riemann form, i.e.,
an $\RR$-bilinear form $E$ on $\CC^g$ that restricts
to an alternating bilinear form $\Lambda\times \Lambda\rightarrow \ZZ$ 
such that~$(u,v)\mapsto E(iu,v)$ is symmetric and positive definite.
Given a $\ZZ$-basis of~$\Lambda$, there is a $2g\times 2g$ matrix, which by abuse of notation
we also denote by~$E$, such that~$E(u,v) = u\transpose E v$.
We say that~$E$ is \emph{principal} if it has determinant~$1$.
In that case, there exists a \emph{symplectic basis}, i.e., a basis such that
$E$ is given in terms of~$(g\times g)$-blocks as 
\[E = \Omega := \left(\begin{array}{rr} 0 & 1\\ -1 & 0\end{array}\right).\]
To a point $\tau\in\HH_g$, we associate the principally polarized abelian variety
with $\Lambda = \tau\ZZ^g + \ZZ^g$ and symplectic basis $\tau e_1,\ldots,\tau e_g, e_1,\ldots, e_g$,
where $e_i$ is the $i$-th standard basis element of~$\ZZ^g$.
Conversely, given a principally polarized abelian variety and a symplectic basis,
we can apply a $\CC$-linear transformation of~$\CC^g$ to write it in this
form (\cite[Chapter~8]{birkenhake-lange}).

\subsection{The algebraic groups}

Given a commutative ring~$R$, let
 $$\GSp(R) = \{ A\in\matrixring{2g}{R} : A\transpose \Omega A = \nu\Omega\text{ with $\nu\in R^\units$}\}.$$
Note that~$\nu$ defines a homomorphism of algebraic groups $\GSp\rightarrow \Gm$,
and denote its kernel by~$\Sp$.
For~$g=1$, we have simply $\GSpspecific{2}=\GL_2$, $\nu=\det$, $\Spspecific{2}=\SL_2$.

The homomorphism $\nu$ has a section~$i$,
satisfying $\nu\circ  i =\mathrm{id}_{\Gm}$, given by\footnote{Warning: our $i$
differs from Shimura's $\iota$ in the sense that $i(t)=\iota(t)^{-1}$.
We made our choice in such a way that $\iota$ is
a section of~$\nu$, where $\nu$ generalizes the determinant.}
$$i(t) = \tbt{1 & 0 \\ 0 & t}.$$
For any ring $R$ for which this makes sense, we also define
$$\GSp(R)^+ = \{A\in\GSp(R) : \nu(A)>0\}.$$
The group $\GSp(\RR)^+$
acts on $\HH_g$ by $$\tbt{a & b \\ c & d}\tau = (a \tau + b) (c \tau + d)^{-1},$$
where $a$, $b$, $c$, $d$ are $(g\times g)$-blocks.
Changes of symplectic bases correspond to the action of 
$\Sp(\ZZ)\subset \GSp(\RR)^+$ on $\HH_g$
(see Lemma~\ref{lem:mundanenew} below),
leading to the well-known fact that
$\Sp(\ZZ)\backslash \HH_g$ parametrizes
the set of isomorphism classes of principally polarized abelian varieties
of dimension~$g$.

The natural map $\Sp(\ZZ)\rightarrow \Sp(\ZZ/N\ZZ)$ is
surjective~\cite[Thm.~VII.21]{newman}.
Its kernel $\Gamma_N$ is called the \emph{principal congruence
subgroup of level~$N$}.

\subsection{Modular forms and group actions}\label{ssec:statementfirst}

A \emph{Siegel modular form} of weight $k$ and level~$N$
is a holomorphic function $f:\HH_g\rightarrow \CC$ such that for all $A=\tbtinl{a}{b}{c}{d}\in \Gamma_N$,
we have $f(A \tau) =\det (c\tau+d)^k f(\tau)$,
and which is ``holomorphic at the cusps''.
We will not define holomorphicity at the cusps,
as it is automatically satisfied for~$g>1$
by the Koecher principle~\cite{koecher},
and is a textbook condition for~$g=1$.

Every Siegel modular form $f$ has a \emph{Fourier expansion} or \emph{$q$-expansion}
\begin{equation}\label{eq:fourier}
f(\tau) = \sum_{\xi} a_{\xi} q^{\xi},
\quad a_{\xi}\in\CC,\quad q^{\xi}:=\exp (2\pi i \trace(\xi \tau) / N),
\end{equation}
where $\xi$ runs over the symmetric matrices in $\matrixring{g}{\frac{1}{2}\ZZ}$
with integral diagonal entries. 
The numbers $a_{\xi}$ are the \emph{coefficients} of the $q$-expansion.

 Let $\FcalN$
 be the field
 \begin{equation}\label{eq:FN}
 \FcalN=\left\{\frac{g_1}{g_2} : \begin{tabular}{c}
$g_i$ are Siegel modular forms of equal weight and level N,\\
with $q$-expansion coefficients in $\QQ(\zeta_N)$, and $g_2\not=0$ \end{tabular}\right\}.
\end{equation}

\begin{proposition}\label{prop:groupaction}
	There is a right action of $\GSp(\ZZ/N\ZZ)$ on $\FcalN$
	given as follows. 
	For $A\in \GSp(\ZZ/N\ZZ)$, let $t = \nu(A)$ and $B = i(t)^{-1} A$.
Then \[f^A = (f^{i(t)})^B,\] where we have:
	\begin{enumerate}\item\label{it:action1}
	For $B\in\Sp(\ZZ/N\ZZ)$, let $\widetilde{B} \in \Sp(\ZZ)$
	be such that $B = (\widetilde{B}\ \mathrm{mod}\ N)$.
	Then $f^B(\tau) = f(\widetilde{B}\tau)$ for all $f\in \FcalN$.
	\item\label{it:action2}
	For $t \in (\ZZ/N\ZZ)^{\units}$, the matrix $i(t)$ acts by the natual Galois
	action of $(\ZZ/N\ZZ)^{\units}$ on $q$-expansion coefficients,
	that is, if $$f = \frac{\sum_{\xi, k} a(\xi, k)\zeta_N^k q^\xi}{\sum_{\xi, k} b(\xi, k)\zeta_N^k q^\xi}\in\FcalN$$
	with $a(\xi,k)$, $b(\xi,k)\in\QQ$, then
	$$f^{i(t)} = \frac{\sum_{\xi,k} a(\xi, k)\zeta_{N}^{k t}q^\xi}{\sum_{\xi,k} b(\xi, k)\zeta_{N}^{k t}q^\xi} .$$
	\end{enumerate}
\end{proposition}
We give detailed references in Section~\ref{sec:ad}.

\begin{remark}
	As it is a group action, the action also satisfies $f^A = (f^{B'})^{i(t)}$
	for $B' = A i(t)^{-1} =  i(t) B  i(t)^{-1}$.
\end{remark}

\subsection{Computing the group action}\label{sec:actioncompute}

We highlight four ways in which, 
given $A\in\GSp(\ZZ/N\ZZ)$, $f\in\FcalN$, and~$\tau$,
we could compute $f^A$ or~$f^A(\tau)$.

\noindent \textbf{1. From the definition.}
The most obvious is to use \eqref{it:action1} and~\eqref{it:action2} directly.

First take $t = \nu(A)$, and write $A = i(t) B$
with $B=i(t)^{-1} A \in\Sp(\ZZ/N\ZZ)$.
Next, compute a lift $\widetilde{B}\in \Sp(\ZZ)$ of~$B$.
This can be done
by following the steps of the proof of 
\cite[Thm.~VII.21]{newman}.
Alternatively, one could compute a lift by
expressing $B$
as a product of standard generators of $\Sp(\ZZ)$
(in fact, in the case $g=1$, this results
in explicit formulas as in \cite[Lemma~6]{MR1730432}).

Then we compute $f^{i(t)}$ and evaluate it in
$\widetilde{B}\tau$.
The disadvantage of this method in practice
is that while $\tau$ can often be engineered to be
in a fundamental region where modular functions converge quickly,
we have no control over $\widetilde{B}\tau$.

For this reason, we will not take this approach,
and we promote the methods 2--4 instead.

\noindent \textbf{2. Using theta functions.}
The function 
$f\in\FcalN$ has an expression as a rational function of
of \emph{theta constants}.
If such an expression is known, then 
we can use a
direct formula
for the action of~$\GSp(\ZZ/N\ZZ)$ on $f$,
which does not even require finding a lift to $\Sp(\ZZ)$.
We give this formula in Section~\ref{sec:theta}
and use it in all our examples
in Section~\ref{sec:examples}.

\noindent \textbf{3. By selecting $f$ in such a way that the action is easy.}
It is sometimes possible to choose $f$
that are fixed by the block-lower-triangular
matrices in $\GSp(\ZZ/N\ZZ)$
and to choose $\tau$ such that $A$ is block-lower-triangular,
in which case we have $f^A = f$.
Enge and the author take this approach in~\cite{enge-streng}.

\noindent \textbf{4. Using the moduli interpretation.}
In some cases, one could use the moduli interpretation of~$f$.
We do not follow this approach in the present article,
but we do illustrate it with the following example.

\begin{example}
For $g=1$ and $N=2$, we have $\Fcal{2}=\QQ(\lambda)$,
where $\lambda$ is the Legendre invariant given
as follows.
To an $\RR$-basis $\omega_1$, $\omega_2$ of $\CC$ with $\tau = \omega_1/\omega_2\in\HH_1$,
we associate the lattice $\Lambda = \omega_1\ZZ+\omega_2\ZZ$,
the elliptic curve $E(\CC) = \CC/\Lambda$,
and the isomorphism $\phi : \FF_2^2\rightarrow E[2] : v\mapsto (\frac{1}{2}(v\cdot (\omega_1,\omega_2))\bmod\Lambda)$.
Let $e_1 = (1,0)$, $e_2 = (0,1)$, and $e_3 = (1,1)$.
For a short Weierstrass model of~$E$
with coordinates $x$ and~$y$, 
let $x_i = x(\phi(e_i))$.
Then we define
$$\lambda(\tau) = \frac{x_3-x_1}{x_2-x_1}.$$

The group $\mathrm{GSp}_{2}(\FF_2)=\SL_2(\FF_2)$
acts on $\FF_2^2$ by permutation
of $e_1$, $e_2$ and~$e_3$.
In other words, we have the isomorphism
$\sigma: \mathrm{GSp}_{2}(\ZZ/2\ZZ)\rightarrow S_3$
given by $A e_i = e_{\sigma(A)(i)}$ for $i=1,2,3$.

Writing
$\lambda_1:=0$, $\lambda_2:=1$, $\lambda_3:=\lambda\in\Fcal{2}$,
we claim that the action of Proposition~\ref{prop:groupaction} is
\begin{equation}\label{eq:lambda}\lambda^{A} = \frac{\lambda_{\rho(3)} - \lambda_{\rho(1)}}
{\lambda_{\rho(2)}-\lambda_{\rho(1)}},\quad\mbox{where}\quad\rho = \sigma(A\transpose).
\end{equation}
As special cases, we have
$$\lambda^{\begin{psmallmatrix} 1 & 1 \\ 0 & 1\end{psmallmatrix}}
= \frac{0-\lambda}{1-\lambda} = \frac{\lambda}{\lambda-1}\quad\mbox{and}\quad
\lambda^{\begin{psmallmatrix}0 & 1 \\ -1 & 0\end{psmallmatrix}} = \frac{\lambda-1}{0-1} = 1-\lambda.$$

To prove the claim, given $A$ and $\tau$, let
$\widetilde{A} = \begin{psmallmatrix} a & b \\ c & d\end{psmallmatrix}$
be a lift of $A$ and consider $(\omega_1',\omega_2') = (\omega_1,\omega_2) \widetilde{A}\transpose$
and $\tau' = \omega_1'/\omega_2' = \widetilde{A}\tau$,
which leads to $\phi' = \phi\circ A\transpose$.
Choose the Weierstrass equation of $E = E'$ in such a way that
$x_i = \lambda_i$. Then
$x_i' = x(\phi'(e_i)) = x(\phi(e_{\rho(i)})) = x_{\rho(i)}$,
hence $\lambda^A(\tau) = \lambda(\tau')$ is indeed given by \eqref{eq:lambda}.
\end{example}
With Rosenhain invariants and the appropriate
isomorphism $\mathrm{GSp}_{4}(\FF_2)\cong S_6$,
one would get the same kind of formulae for
$g=N=2$.

We hope that similar formulae can be obtained for other small
values of $g$ and $N$ on a case-by-case basis.

\subsection{Complex multiplication}\label{ssec:cm}

A \emph{primitive CM point in $\HH_g$} is a point such that
the endomorphism algebra $\End(A)\otimes\QQ$ of the
corresponding principally polarised abelian variety
$(A,E)$ is a number field $K$ of degree~$2g$.
We now explain what they look like and how to compute them.

\subsubsection{Primitive CM points}

All primitive CM points 
are of the following form. For details, see
\cite[\S I.3, 
Thms.~I.4.1, 
I.4.5
]{lang-cm}.
Let $K$ be a \emph{CM field} of degree~$2g$, that is, a totally imaginary 
quadratic extension of a totally real number field of degree~$g$.
Let~$\Phi=\{\phi_1,\ldots,\phi_g\}$
be a \emph{CM type}, that is, a set of~$g$ embeddings $K\rightarrow \CC$
such that no two are complex conjugate.
By abuse of notation, write $\Phi(x)=(\phi_1(x),\ldots,\phi_g(x))\in\CC^g$
for~$x\in K$.
Let $\mathfrak{b}$ be a lattice in~$K$, that is, a non-zero fractional ideal of an order of~$K$.
Let $\xi\in K$ be such that for all $\phi\in\Phi$, the complex number $\phi(\xi)$
lies on the positive imaginary axis,
and such that the bilinear form $E_{\xi} : K\times K\rightarrow \QQ: (x,y)\mapsto \trace(\overline{x}y\xi)$
maps $\mathfrak{b}\times\mathfrak{b}$ to~$\ZZ$.
Take $A = \CC^g/\Phi(\mathfrak{b})$ and let a polarization on $A$ be given by $E_{\xi}$ extended
$\RR$-linearly from $\mathfrak{b}$ to~$\CC^g$.
Finally, let $\mathcal{O}=\{x\in K : x\mathfrak{b}\subset\mathfrak{b}\}$ be the multiplier ring
of~$\mathfrak{b}$, and embed it into $\End(A)$ by taking $x\Phi(u) = \Phi(xu)$ and extending
this linearly.
We find an embedding $K\rightarrow \End(A)\otimes \QQ$.
Let $B$ be a symplectic basis of~$\mathfrak{b}$ for the pairing~$E_{\xi}$.
Then we get a point 
$$\tau = (\Phi(b_{g+1})|\cdots|\Phi(b_{2g}))^{-1}(\Phi(b_1)|\cdots|\Phi(b_g))\in\HH_g.$$
We denote this point also by $\tau(\Phi, \mathfrak{b}, \xi, B)$
or simply by $\tau(\Phi, B)$.
It is a primitive CM point if and only if $A$ is simple, which happens
if and only if $\Phi$ is \emph{primitive}, that is,
if and only if $\Phi_{|K'}$ is not a CM type for any CM subfield $K'\subset K$.
Moreover, all primitive CM points are of this form.

We will make the reciprocity law explicit in terms of the quadruples
$(\Phi, \mathfrak{b}, \xi, B)$.

\subsubsection{Computing the primitive CM points}
\label{sec:algperiodmatrices}

	Given $K$, we can find representatives $(\Phi, \mathfrak{b},\xi)$ for all isomorphism
	classes of principally polarized abelian varieties with CM by $\mathcal{O}_K$
	using van Wamelen's algorithm~\cite[Algorithm~1]{vanwamelen}.
	For a version of this algorithm without duplicates, see
	\cite[Algorithm~4.12]{runtime}.
	
	\label{sec:basis}
	A symplectic basis~$B$ can be computed using classical
	methods that are available as \verb!E.symplectic_form()!
	in SageMath~\cite{sage} or the
	\verb!FrobeniusFormAlternating! function in Magma~\cite{magma}.
	See also~\cite[Algorithm~5.2]{runtime}.
	
	Together, this gives a method for finding all CM points for~$\mathcal{O}_K$,
	and we implemented this as \verb!CM_Field(...).period_matrices()!
	in \cite{cmcode},
	which returns CM points in the form of SageMath objects \verb!tau!
	that include the data of $\Phi$, $\mathfrak{b}$,
	$\xi$, and the basis~$B$ and can produce arbitrary-precision approximations
	of~$\tau$.

	In practical computations, one wants to take $B$ such that numerical
	formulas for modular forms converge quickly when evaluated
	in~$\tau$.
	This can be done by first taking $B$ arbitrary and then applying
	an $\Sp(\ZZ)$-reduction algorithm to~$\tau$ to move it to
	a nice region such as a fundamental domain,
	and adjusting $B$
	accordingly, see
	\cite{DHBHS} and \cite[Section~1.3]{KLLRSS}.
	The specific case $g=1$ comes down to Gauss reduction of quadratic forms,
	and details for $g=2$ are given in Dupont's thesis~\cite{dupont}.
	We implemented this for $g\leq 2$ as \verb!tau.reduce()! in~\cite{cmcode}.
    For an implementation for $g=3$, see Kılıçer~\cite{Genusthreereduction}.

\subsection{The type norm}\label{sec:reflex}\label{sec:typenorm}

An important ingredient in the reciprocity law
is the type norm map $N_{\Phi}: K\rightarrow \CC:
x\mapsto \prod_{\phi\in\Phi} \phi(x)$
associated to a CM type~$\Phi$.
Its image generates the \emph{reflex field} $\reflexfield = \QQ(N_\Phi(K))\subset \CC$ of~$\Phi$,
and there is a reflex type norm map
\[ N_{\reflextype} : \reflexfield\rightarrow K :
x \mapsto \prod_{\psi\in\reflextype} \psi(x),\]
where the product is taken over the \emph{reflex type}
$\reflextype$, i.e., the set of embeddings $\psi : \reflexfield \rightarrow \overline{K}$
such that there is a map $\phi:\overline{K}\rightarrow\CC$ with $\phi\circ\psi =\mathrm{id}_{\reflexfield}$
and $\phi_{|K}\in\Phi$.

The reflex type norm extends to ideals via
(\cite[Proposition~29 in \S8.3]{shimura-taniyama})
\[ N_{\reflextype}(\mathfrak{a})\mathcal{O}_L =
\prod_{\psi\in\reflextype} (\psi(\mathfrak{a})\mathcal{O}_L)\]
for any number field $L\subset \overline{K}$ containing the images
$\psi(\reflexfield)$ for all $\psi\in\reflextype$.
We implemented this as \verb!Phir = Phi.reflex()!
and \verb!Phir.type_norm()! in \cite{cmcode}.

Given a positive integer $M$ and an order $A$ in a number field
such that $A$ is maximal at all primes dividing~$M$,
let $I_A(M)$ be the group of fractional ideals
of $A$ that can be written as $\ag\bg^{-1}$
with $\ag+MA = A = \bg+MA$.
We use the shorthand $$\IKr{M} = I_{\mathcal{O}_{\reflexfield}}(M)$$
and observe that $N_{\reflextype}$ sends $\IKr{M}$
to $I_{\mathcal{O}_{K}}(M)$.

In order for our results to apply to arbitrary orders
$\mathcal{O}\subset K$, we give the following
variant of~$N_{\reflextype}$.
Let $F$ be the smallest positive integer such that $F\mathcal{O}_K$
is contained in $\mathcal{O}$. Then there is a natural isomorphism
$I_{\mathcal{O}}(F)\rightarrow I_{\mathcal{O}_K}(F) :\ag \mapsto \ag\mathcal{O}_K$,
and we define $$\NPhirO: I(F)\rightarrow  I_{\mathcal{O}}(F)\quad\mbox{by}
\quad\NPhirO(\ag)\mathcal{O}_K = N_{\reflextype}(\ag).$$
In particular, we have $N_{\reflexfield, \mathcal{O}_K} = N_{\reflexfield}$.

\subsection{The first main theorem}

Given a number field $K$, two bases $B = (b_1,\ldots,b_n)$
and $C=(c_1,\ldots, c_n)$ of $K$ over $\QQ$ and an element $x\in K$,
we denote by $\leftmult{x}{C}{B}$ the $n\times n$ matrix over $\QQ$
such that for each $j$ the $j$th row is $xc_j$ expressed in terms of~$B$.
If we interpret $B$ and $C$ as column vectors in $K^n$,
then we have
\begin{equation}\label{eq:matrixofmult}
	x\ C =  \leftmult{x}{C}{B}\ B.
\end{equation}

We say that a matrix $M\in\matrixring{d}{\QQ}$
is \emph{invertible mod $N$}
if the numerator of the determinant and the denominators 
of all coefficients are coprime to~$N$.
In that case, reduction modulo $N$ defines
a matrix $(M\ \mathrm{mod}\ N)\in\GL_d(\ZZ/N\ZZ)$.
\begin{theorem}[General reciprocity law]\label{thm:general}
	Let $\tau=\tau(\Phi, \mathfrak{b}, \xi, B)\in\HH_g$ be a primitive CM point with CM field~$K$,
	let $N$ be a positive integer and let $f\in\mathcal{F}_N$
	be a function that does not have a pole at $\tau$.
	Let $F$ be the smallest positive integer such that $F\mathcal{O}_K$ is contained in the multiplier ring
	$\mathcal{O}$ of~$\mathfrak{b}$ ($F=1$ if $\mathcal{O}=\mathcal{O}_K$).
	Then $f(\tau)$ lies in the ray class field of $\reflexfield$ for the modulus $NF$.
	
	For any fractional ideal
	$\mathfrak{a}\in \IKr{NF}$, if $[\mathfrak{a}]$
	is the class of $\mathfrak{a}$ in the ray class group mod~$NF$,
	then $f(\tau)^{[\mathfrak{a}]}$ is given as follows.
	
	Choose a symplectic basis
	$C$
	of~$\NPhirO(\mathfrak{a})^{-1}\mathfrak{b}$
	with respect to $E_{
		N(\mathfrak{a})\xi}$
	and let \[\tau' = \tau(\Phi, \NPhirO(\mathfrak{a})^{-1}\mathfrak{b},
	N(\mathfrak{a})\xi, C).\]
	Then $M:=\leftchange{C}{B}$
	is in 
	$\GSp(\QQ)^{+}$,
	with $\nu(M) = N(\mathfrak{a})^{-1}$,
	and is invertible mod~$N$.
	Moreover, we have
	$U:=(M\ \mathrm{mod}\ N)^{-1}\in\GSp(\ZZ/N\ZZ)$, and
	\begin{equation}\label{eq:general}
	f(\tau)^{[\mathfrak{a}]} = 
	f^U (M\tau)
	= f^U (\tau').
	\end{equation}
\end{theorem}
For the computation of suitable $B$ and $C$
(and hence $\tau'$, $M$ and $U$), see
Section~\ref{sec:basis}.
We implemented the complete computation of $\tau'$, $M$ and $U$
in \cite{cmcode} as
$$\verb!tau.Shimura_reciprocity(a, N, period_matrix=True)!.$$
For the computation of $f^U$, see Section~\ref{sec:actioncompute}.
This makes $f^U(\tau')$ an explicit expression for
$f(\tau)^{[\mathfrak{a}]}$ that is suitable for computation.

\subsection{The class fields generated by complex multiplication}

Fix a primitive CM point $\tau$ and let the notation be as above.
The field
\[\cmext{N} = \reflexfield\big(f(\tau) : f\in\FcalN\ \mbox{s.t.}\ f(\tau)\not=\infty\big)\subset \CC.\]
is an abelian extension of~$\reflexfield$, and
we now describe 
the corresponding ideal group.

Let $F$ be the smallest positive integer
satisfying $F\mathcal{O}_K\subset\mathcal{O}$.
For~$x\in K$, we write
$x\equiv 1\modstar N\mathcal{O}$
to mean
$x=a/b$ where $a$ and $b\not=0$
are elements of~$\mathcal{O}$ that are invertible
modulo~$NF\mathcal{O}$
and congruent to each other modulo~$N\mathcal{O}$.
For various equivalent definitions,
see Definition~\ref{def:equivalence}.
This is equivalent to standard definitions in the case $\mathcal{O}=\mathcal{O}_K$.

\begin{theorem}\label{thm:idealgroup}
The extension $\cmext{N}/\reflexfield$
is abelian and of conductor dividing~$NF$.
Its Galois group is isomorphic via the Artin
isomorphism to the quotient group $I(NF) / \cmgp{N}$,
where $I(NF)$ is the group
of fractional $\mathcal{O}_{\reflexfield}$-ideals
with numerator and denominator coprime to $NF$,
and
\begin{equation}\label{eq:HPhiO}
\cmgp{N} = \left\{ \mathfrak{a}\in \IKr{NF} :
\exists \mu \in K\ \text{with}\ \begin{array}{c}
\NPhirO(\mathfrak{a}) = \mu\mathcal{O}\\
\mu\overline{\mu} = N(\mathfrak{a})\in\QQ\\
{\mu\equiv 1\modstar N\mathcal{O}}\end{array}\right\}.
\end{equation}
 \end{theorem}
\begin{remark}A similar result for fields
of definition of torsion points on normalized
Kummer varieties appears as
Main Theorem 3 in \S17 of~\cite{shimura-taniyama}
(see also Main Theorem 2 in \S16 of \cite{shimura-taniyama, shimura}).

A similar result in adèlic language for
fields of moduli of abelian varieties with torsion structure
appears as
Corollary 5.16 of~\cite{shimuraAF}
and Corollary 18.9 of~\cite{shimura}).

Our statement and proof are
directly in the language
of the fields $\FcalN$ using the reciprocity laws.
The proof is in Section~\ref{sec:proofidealgroup}.
\end{remark}
 Note that this theorem implies that $\cmext{N}$ depends only on $\mathcal{O}$ and~$\Phi$,
not on~$\tau$.

\begin{definition}\label{def:mu}
For $\mathfrak{a}\in \cmgp{1}$, we write
$\mu(\mathfrak{a})$ to denote any element
of $K^{\units}$ as in~\eqref{eq:HPhiO} with $N=1$.
\end{definition}
Note that $\mu(\mathfrak{a})$ is uniquely defined
up to multiplication by roots of unity in~$\mathcal{O}$.

\begin{algorithm}[Computing $\mu(\mathfrak{a})$ for the case $\mathcal{O}=\mathcal{O}_K$]\label{alg:mu}\hfill\\
	\textbf{Input:} $\reflextype$ and a fractional ideal $\mathfrak{a}$ of $\mathcal{O}_{\reflexfield}$.\\
	\textbf{Output:} The list of all
	elements $\mu\in K^{\units}$ such that $N_{\reflextype}(\mathfrak{a}) = \mu\mathcal{O}_K$
		and $\mu\overline{\mu} \in\QQ$.\\
	\textbf{Algorithm:}
	\begin{enumerate}
		\item Compute the class group and unit group of $K$. Compute the maximal totally real
		subfield $K_0$ of $K$ and its unit group $\mathcal{O}_{K_0}^{\units}$.
		Compute the quotient $\mathcal{O}_{K_0}^{\units}/N_{K/K_0}(\mathcal{O}_{K}^{\units})$.
		This can be done using e.g.~the algorithms of \cite{cohen},
		or the software Magma~\cite{magma} or PARI~\cite{pari}.
	PARI~\cite{pari} can be used through SageMath~\cite{sage}.
	\item Compute $N_{\reflextype}(\mathfrak{a})$ and test whether it is principal.
		\begin{enumerate}
	\item If it is, then let $\beta\in K^{\units}$ be a generator.
	\item Otherwise return an empty list.
\end{enumerate}
\item Let $u = \beta\overline{\beta} / N(\mathfrak{a}) \in \mathcal{O}_{K_0}^{\units}$
and test whether $u \in N_{K/K_0}(\mathcal{O}_{K}^{\units})$.
		\begin{enumerate}
	\item If it is, then take $v\in\mathcal{O}_K^{\units}$ such that $v\overline{v}=u$.
	\item Otherwise return an empty list.
\end{enumerate}
\item Return $\{w\beta/v \ :\ w\in(\mathcal{O}_{K}^{\units})^{\mathrm{tors}}\}$.
	\end{enumerate}
\noindent We implemented this as \verb!a_to_mus(Phir, a)! in~\cite{cmcode}.
\end{algorithm}
\begin{proof}[Proof of Algorithm~\ref{alg:mu}]
	It is clear that every $\mu = 
	w
	\beta /v$ in the output generates $N_{\reflextype}(\mathfrak{a})$
	and satisfies $\mu\overline{\mu}=N(\mathfrak{a})\in\QQ$.
	Conversely, suppose that $N_{\reflextype}(\mathfrak{a})=\mu\mathcal{O}_K$
	and $\mu\overline{\mu} \in\QQ$. Then $\mu\overline{\mu} = N(\mathfrak{a})$
	and $N_{\reflextype}(\mathfrak{a})$ is principal, so $\beta$ exists.

	Let $r =\beta/\mu\in\mathcal{O}_{K}^{\units}$.
	Then $r\overline{r} = \beta\overline{\beta} / N(\mathfrak{a}) = u$,
	hence $v$ exists.
	
	Let $w = v/r\in\mathcal{O}_{K}^{\units}$. Then $w\overline{w}=1$, so $w$
	is a root of unity. Therefore, $\mu = \beta/r = w\beta/v$ is listed by the algorithm.
\end{proof}

Using Algorithm~\ref{alg:mu} and standard algorithms for computing
ray class groups and computing quotients of groups, we can compute the group
$H_{\Phi,\mathcal{O}}(N)/\PKr{NF}$ as a
subset of the ray class
group $\mathrm{Cl}(NF) = \IKr{NF}/\PKr{NF}$
and in turn compute the group
$\mathrm{Gal}(\cmext{N}/\reflexfield) = \IKr{NF} / H_{\Phi,\mathcal{O}}(N)$.
For an efficient and detailed algorithm, see Asuncion~\cite{jaredpaper1,asuncionthesis}.

\subsection{Class invariants and a special case of the main theorem}\label{ssec:statementbeforelast}

The reciprocity law (Theorem~\ref{thm:general})
gives the Galois action of $\Gal(\cmext{N})/\reflexfield)$
on $f(\tau)$.
In order to decide whether $f(\tau)$ is in the field
$\cmext{1}$ generated by the values of Igusa invariants at~$\tau$,
we need only the Galois action of the subgroup $\Gal(\cmext{N})/\cmext{1})$.
For that particular subgroup, we have a simpler version of the reciprocity law as follows.

From Theorem~\ref{thm:idealgroup}, we have
$\Gal(\cmext{N}/\cmext{1})= (\IKr{NF}\cap \cmgp{1}) / \cmgp{N}$.
For any fractional ideal
$\mathfrak{a}\in \IKr{NF}\cap\cmgp{1}$,
we get an element $\mu=\mu(\mathfrak{a})\in K$ 
with $\mu\overline{\mu}=N(\mathfrak{a})\in\QQ$
and $\NPhirO(\mathfrak{a})=\mu\mathcal{O}$
(cf.~Definition~\ref{def:mu}).

\begin{theorem}\label{thm:special}
		Let $\tau=\tau(\Phi, \mathfrak{b}, \xi, B)\in\HH_g$ be a primitive CM point,
	let $N$ be a positive integer and let $f\in\mathcal{F}_N$
	be a function that does not have a pole at $\tau$.
	
	For any $\mathfrak{a}\in \IKr{NF}\cap\cmgp{1}$,
	we have
	$$f(\tau)^{[\mathfrak{a}]} = f^{\wasepsilon{\mu}}(\tau)$$
	where $\mu\in K$ is such that $\mu\overline{\mu} =N(\mathfrak{a})$
	and $\NPhirO(\mathfrak{a})=\mu\mathcal{O}$.
\end{theorem}

\newcommand{\bracketsO}{\wasepsilon{(\mathcal{O}^\units)^{\mathrm{tors}}}}
Observe that we have constructed a map
\label{sec:statementspecific}
\begin{align}\label{eq:wascalledg}
\wascalledg : \frac{\IKr{NF}\cap\cmgp{1}}{\cmgp{N}} \quad
\longrightarrow&  \quad \GSp(\ZZ/N\ZZ)/\bracketsO\\
[\mathfrak{a}]\quad \longmapsto & \quad\wasepsilon{\mu(\mathfrak{a})},\nonumber\end{align}
where $\bracketsO = \{\wasepsilon{u} : u\in (\mathcal{O}^\units)^{\mathrm{tors}}\}$.
The theorem then states
$$f(\tau)^{[\mathfrak{a}]} = f^{\wascalledg(\mathfrak{a})}(\tau).$$
\begin{remark}If $\mathfrak{a}$ is principal,
then the reciprocity map becomes even more explicit:
\begin{equation}\label{eq:moreexplicit}\wascalledg((\alpha)) = \wasepsilon{N_{\reflextype}(\alpha)}\quad\text{for all}\quad \alpha\in\reflexfield\ \text{with}\ (\alpha)\in \IKr{NF}.
\end{equation}
\end{remark}

We now get the following way to look for \emph{class invariants},
that is, values $f(\tau)$ with $f\in\Fcalall$ and $f(\tau) \in \cmext{1}$.
Given $\tau = \tau(\Phi, \mathfrak{b}, \xi, B)$, we compute the image $r(X)$
for a set of generators $X$ of the domain of~$r$.
Then $f(\tau)$ is a class invariant whenever $f$
is fixed by $r(X)$.

\begin{algorithm}[Computing the image of $r$]\label{alg:imageofr}\hfill\\
	\textbf{Input:} $N$, $F$, $\Phi$, $\mathfrak{b}$, $\xi$, $B$.\\
	\textbf{Output:} a complete set $R\subset \GSp(\ZZ/N\ZZ)$
	of representatives of the image $r(X)$ of
	a set of generators $X$ of the domain of $r$.\\
	\textbf{Algorithm:}
	\begin{enumerate}
		\item Compute $G = (I(NF)\cap H_{\Phi,\mathcal{O}}(1)) / \PKr{NF}\subset \mathrm{Cl}(NF)$.
		\item Let $X$ be a set of generators of $G$.
		\item For every element of $X$, choose a representative $\mathfrak{a}$,
		take an arbitrary $\mu$ in the output of Algorithm~\ref{alg:mu}
		and compute $\wasepsilon{\mu}$.
		Return the list of matrices $\wasepsilon{\mu}$ computed in this way.
\end{enumerate}
\end{algorithm}
We implemented this algorithm as \verb!reciprocity_map_image(tau, N)!
in~\cite{cmcode}.
We give an example in Section~\ref{ssec:detailedexample}.

\subsection{Complex conjugation}\label{ssec:conjugationstatement}

  Now assume that $f(\tau)$ is a class invariant, that is, is
in $\cmext{1}$.
  The coefficients of its minimal polynomial $H_f$ over~$\reflexfield$ are elements of~$\reflexfield$.
  If these coefficients are in the maximal totally real subfield
  $\realreflex\subset \reflexfield$, then they are easier
  to compute and take up even less space.
  We now give a sufficient criterion for these coefficients to be in~$\realreflex$.

  Let $\mathcal{M}:= \QQ(f(\tau) : f \in \Fcal{1}, f(\tau)\not=\infty)$
  be the \emph{field of moduli} of the
  principally polarized abelian variety corresponding to~$\tau$,
  and let $\mathcal{M}_0 = \mathcal{M}\realreflex\subset \mathcal{M}\reflexfield=\cmext{1}$.
  
  We give two results. Lemma~\ref{lem:complexconjugation} says that often
  $\mathcal{M}_0$ is strictly smaller than $\cmext{1}$.
  And if this is the case, then 
  Proposition~\ref{prop:complexconjugation} gives a criterion for
  the minimal polynomial of $f(\tau)$ over $\reflexfield$
  to have coefficients in $\realreflex$.

  \begin{lemma}\label{lem:complexconjugation}
     Suppose $\tau$ corresponds to a pair $(\mathfrak{b},\xi)$.
\begin{enumerate}
  \item  The degree of $\cmext{1}/\mathcal{M}_0$ equals~$2$
     if and only if there is an ideal $\mathfrak{a}\in I(F)$
     and an element $\mu\in K^\units$ such that
     $\NPhirO(\mathfrak{a})\overline{\mathfrak{b}} = \mu \mathfrak{b}$
     and $\mu\overline{\mu} \in\QQ$.
\item  If $g\leq 2$, $\mathfrak{b}$ is coprime to $F\mathcal{O}$,
and $\Phi$ is a primitive CM type, 
then the conditions in part~(1) are satisfied and we can take
\begin{enumerate}
\item $g=1$, $\mathfrak{a}=N_{\Phi}(\mathfrak{b}\overline{\mathfrak{b}}^{-1}\mathcal{O}_K)$
and $\mu=1$; or
  \item $g=2$, $\mathfrak{a}=N_{\Phi}(\mathfrak{b}\mathcal{O}_K)$
and~$\mu=N(\mathfrak{b})$.
\end{enumerate}
\item If $\mathfrak{b}=\mathcal{O}$, then the conditions
in part~(1) are satisfied and we can take $\mathfrak{a}=\mathcal{O}_{\reflexfield}$ and~$\mu=1$.
\end{enumerate}
  \end{lemma}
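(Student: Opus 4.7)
My plan for part~1 is to characterize $[\cmext{1}:\mathcal{M}_0]=2$ by the existence of a specific Galois automorphism $\sigma$ of $\cmext{1}$ and then translate this existence, via Shimura reciprocity (Theorem~\ref{thm:idealgroup}), into the arithmetic condition on $(\mathfrak{a},\mu)$. First I would note that $\cmext{1}=\mathcal{M}\reflexfield$, so $[\cmext{1}:\mathcal{M}_0]\le[\reflexfield:\realreflex]=2$, with equality iff there exists a nontrivial $\sigma\in\Gal(\cmext{1}/\mathcal{M}_0)$. Since $\sigma$ fixes $\mathcal{M}\subset\mathcal{M}_0$ and $\cmext{1}=\mathcal{M}\reflexfield$, nontriviality forces $\sigma$ to act as complex conjugation $c$ on $\reflexfield$. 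Then $\sigma c^{-1}\in\Gal(\cmext{1}/\reflexfield)$ equals $\mathrm{Artin}(\mathfrak{a})$ for some $\mathfrak{a}\in\IKr{F}$ by Theorem~\ref{thm:idealgroup}, so $\sigma=\mathrm{Artin}(\mathfrak{a})\,c$, and the fixing condition ``$\sigma$ fixes $\mathcal{M}$'' becomes ``$\sigma$ preserves the isomorphism class of the polarized abelian variety $(\mathfrak{b},\xi)$''.

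Next I would make both factors explicit on the ppav class. The reciprocity formula sends $(\mathfrak{b},\xi)\mapsto(N_{\reflextype,\mathcal{O}}(\mathfrak{a})^{-1}\mathfrak{b},N(\mathfrak{a})\xi)$. For the action of $c$, I use that $K$ is CM, so $\overline{\phi(x)}=\phi(\overline{x})$ for every embedding $\phi$; hence the complex conjugate lattice $\overline{\Phi(\mathfrak{b})}$ coincides with $\Phi(\overline{\mathfrak{b}})$, and after re-identifying $\overline{\Phi}$ with $\Phi$ via complex conjugation on $K$ (which absorbs the sign flip of $\xi$), $c$ sends the class of $(\mathfrak{b},\xi)$ to the class of $(\overline{\mathfrak{b}},\xi)$. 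Composing the two and requiring an isomorphism back to $(\mathfrak{b},\xi)$ via multiplication by some $\lambda\in K^\times$ yields $\lambda\mathfrak{b}=\overline{N_{\reflextype,\mathcal{O}}(\mathfrak{a})^{-1}\mathfrak{b}}$ together with $\lambda\overline{\lambda}N(\mathfrak{a})=1$. Taking complex conjugates and setting $\mu=\overline{\lambda}^{-1}$ produces $N_{\reflextype,\mathcal{O}}(\mathfrak{a})\overline{\mathfrak{b}}=\mu\mathfrak{b}$ with $\mu\overline{\mu}=N(\mathfrak{a})\in\QQ$; conversely, taking absolute norms of the lattice equation shows that $\mu\overline{\mu}\in\QQ$ already forces $\mu\overline{\mu}=N(\mathfrak{a})$, so the condition matches the lemma. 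The converse direction (given such $(\mathfrak{a},\mu)$, build $\sigma$) is immediate.

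For parts~2 and~3, my plan is direct verification of the condition from part~1. Part~3 is immediate: $\overline{\mathcal{O}}=\mathcal{O}$ and $N_{\reflextype,\mathcal{O}}(\mathcal{O}_{\reflexfield})=\mathcal{O}$, so both sides equal $\mathcal{O}$ with $\mu=1$. For $g=1$ (part~2a), $\reflexfield=K$ and $N_{\reflextype}=N_{\Phi}$ is the identity on fractional ideals, so $\mathfrak{a}=\mathfrak{b}\overline{\mathfrak{b}}^{-1}$ with $\mu=1$ gives $N_{\reflextype}(\mathfrak{a})\overline{\mathfrak{b}}=\mathfrak{b}$ at once. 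For $g=2$ (part~2b), I would reduce to the type-norm identity $N_{\reflextype}(N_{\Phi}(\mathfrak{b}))\,\overline{\mathfrak{b}}=N_{K/\QQ}(\mathfrak{b})\,\mathfrak{b}$ and prove it by expanding both sides as products over the four embeddings of $K$ into its Galois closure: one starts from $N_{\Phi}(\mathfrak{b})\overline{N_{\Phi}(\mathfrak{b})}=N_{K/\QQ}(\mathfrak{b})\mathcal{O}_{\reflexfield}$, applies $N_{\reflextype}$ to obtain $N_{\reflextype}(N_{\Phi}(\mathfrak{b}))\overline{N_{\reflextype}(N_{\Phi}(\mathfrak{b}))}=N_{K/\QQ}(\mathfrak{b})^2\mathcal{O}_K$, and uses a case analysis on the Galois closure of $K$ to pin down the individual factor as $N_{K/\QQ}(\mathfrak{b})\,\mathfrak{b}/\overline{\mathfrak{b}}$.

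The main obstacle is the careful bookkeeping in step two of part~1: determining precisely how complex conjugation acts on the isomorphism class of a CM ppav, in particular tracking the sign of $\xi$ and the switch between $\Phi$ and $\overline{\Phi}$, and reconciling conventions so that the resulting ideal equation appears in the form of the lemma rather than its conjugate. Once that action is nailed down, the remaining steps of part~1 are algebraic manipulation in the fractional ideal group of $K$, and parts~2--3 are routine verifications.
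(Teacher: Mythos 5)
Your proposal follows essentially the same route as the paper: for part~1, both arguments take a nontrivial element of $\Gal(\cmext{1}/\mathcal{M}_0)$, compose with complex conjugation to land in $\Gal(\cmext{1}/\reflexfield)$, and translate the resulting condition via Theorem~\ref{thm:idealgroup} into the pair $(\mathfrak{a},\mu)$; for part~2(b) you prove exactly the same type-norm identity $N_{\reflextype}N_{\Phi}(\mathfrak{b}) = N_{K/\QQ}(\mathfrak{b})\,\mathfrak{b}\,\overline{\mathfrak{b}}^{-1}$ that the paper proves (your detour through $N_{\reflextype}(N_\Phi(\mathfrak{b}))\cdot\overline{N_{\reflextype}(N_\Phi(\mathfrak{b}))}$ is unnecessary but harmless, since the ``case analysis on the Galois closure'' is precisely where the paper's direct computation of the four embeddings $\mathrm{id},\sigma,\sigma^{-1},$ complex conjugation occurs); and parts~2(a) and~3 are identical. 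The argument is sound and matches the paper's in structure and substance.
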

  
  \begin{remark}
  	The conditions in part (1) are equivalent to condition (5.1) of Enge-Streng~\cite{enge-streng}.
  	Proposition 5.3 of \cite{enge-streng} gives some cases in which they hold for $g=3$ 
  	and $g=6$.
  	\end{remark}
  \begin{proposition}\label{prop:complexconjugation}
  	Given $\tau = \tau(\Phi, B)$ and $f\in \FcalN$,
  	assume
  $\deg \cmext{1}/\mathcal{M}_0=2$
  and $f(\tau)\in\cmext{1}$.

  Let $(\mathfrak{a}, \mu)$ be as in Lemma~\ref{lem:complexconjugation}(1)
  and assume
  that $\mathfrak{a}$ is coprime to~$N$.
  Write $B=(b_1,\ldots,\allowbreak b_g,\allowbreak b_{g+1},\allowbreak \ldots,b_{2g})$
  and consider the $\QQ$-basis
    $Q=(\mu^{-1} \overline{b_1},\ldots,\allowbreak 
  \mu^{-1}\overline{b_g},\allowbreak 
  \mu^{-1}\overline{b_{g+1}},\allowbreak \ldots,
  \mu^{-1}\overline{b_{2g}})$ of~$K$.
  
  Then $\leftchange{Q}{B}$ is invertible modulo~$N$ with inverse
  $V\in \GSp(\ZZ/N\ZZ)$.
  Moreover, the following are equivalent:
  \begin{enumerate}
  \item $f(\tau)\in \mathcal{M}_0$,
  \item $f^{V}({\tau})=f({\tau})$.
  \end{enumerate}
  If these conditions are satisfied, then
  the minimal polynomial of $f(\tau)$ over
  $\reflexfield$ has coefficients in~$\realreflex$.
  \end{proposition}

The assumption that $\mathfrak{a}$ be coprime to~$N$ is without loss of generality.

\begin{example}
	Suppose $g=1$ and $\mathfrak{b} = \ZZ[\sqrt{D}]$.
	Then we can take $b_1=\sqrt{D}$ and $b_2=1$, and by Lemma~\ref{lem:complexconjugation}
	also $\mu=1$, so $c_1=-b_1$ and $c_2=b_2$, hence
	$M$ is the diagonal $2\times 2$ matrix $\begin{psmallmatrix}-1&0\\0&1\end{psmallmatrix}$ and so is~$V$.
	As the matrix $-I\in\SL_2(\ZZ)$ acts trivially on every $\tau\in\HH_1$,
	we find that $V$ acts exactly as $i(-1\ \mathrm{mod}\ N)$ does, which
	is as complex conjugation of the coefficients of~$f$.
	The condition \ref{prop:complexconjugation}(2) then translates
	to $f$ having only real coefficients in its $q$-expansion.
\end{example}

  We will prove Lemma~\ref{lem:complexconjugation} and
  Proposition~\ref{prop:complexconjugation} in Section~\ref{sec:conjugationproof}.
  These results show that, if we restrict to~$f$ that satisfy~$f^{V} = f$,
  then the minimal polynomial of $f(\tau)$
  over~$\reflexfield$ is defined over~$\realreflex$.
  We implemented the computation of the matrix $V$ in~\cite{cmcode} as
  \verb!tau.complex_conjugation_symplectic_matrix(N)!.
 
 
 \section{The ad\`elic version} \label{sec:ad}
 
 In Section~\ref{sec:proof}, we give a proof of the results
 stated in Sections \ref{ssec:statementfirst}--\ref{ssec:statementbeforelast}
 (including the reciprocity law).
 For this, we use Shimura's own formulation of his reciprocity law,
 which we state in Section~\ref{sec:ad}.
 In Section~\ref{sec:conjugationproof}, we prove the results
 about complex conjugation stated in Section~\ref{ssec:conjugationstatement}.
 
 The reader who is not interested in the proof, or would
 like to see the applications first, is advised
 to skip ahead and read Sections~\ref{sec:theta} (Theta constants),
 \ref{sec:examples} (Examples) and \ref{sec:applications}
 (Applications),
 first. They are independent of Sections~\ref{sec:ad}--\ref{sec:conjugationproof}.
 
Shimura developed
his reciprocity laws for various types of multivariate modular functions, modular forms,
and theta functions
in a series of articles \cite{shimura-models-I, shimura-models-II, shimura-arithmetic, shimura-fourier, shimura-theta-cm, shimura-reciprocity-theta}.
See also the textbook \cite[26.10]{shimura}.
Rather than reproving the reciprocity law in our setting, we will
quote a streamlined version stated by Shimura in
the language of id\`eles
and rework it (in Section~\ref{sec:proof})
into a version with ideals
and a more explicit group action.
This means that our proof
will not be the most direct proof, as
the ad{\`e}lic statement mashes all levels~$N$ together, and
we take them apart again; and Shimura's original series of
articles
starts with theta functions, while we give
them as a special case afterwards
(Section~\ref{sec:theta}).
However, our approach does allow us to give both the
computationally practical statement and the elegant ad{\`e}lic statement,
explain how they are related, and keep the proofs
short at the same time.

The reader who would rather see a direct proof of our explicit
version of the reciprocity
law should see  Yang~\cite{yang-shimura}.
Yang, inspired by our explicit statements, gives a direct
proof of our explicit version of Shimura's reciprocity law
(Theorem~4.1 of~\cite{yang-shimura} is our Theorem~\ref{thm:general})
and uses that to prove the ad\`elic statement.

We start by citing Shimura's ad\`elic action of~$\GSp$,
and linking it to the actions of Proposition~\ref{prop:groupaction}.

 Let $\AA$ be the ring of ad\`eles of~$\QQ$ and call an element of its unit group
 \emph{positive} if its $\RR$-component is positive.
Let $\ZZhat=\lim_{\leftarrow} \ZZ/N\ZZ$ be the ring
of finite integral ad\'eles, so $\AA = (\ZZhat\otimes\QQ) \times \RR$.
Let $\Fcalall = \cup_{N}\FcalN$.
\begin{proposition}\label{prop:biggroupaction}
There is a unique right action of 
$\GSp(\AA)^+$ on $\Fcalall$
satisfying
\begin{enumerate}
\item for~$x\in\AA^\units$ and $f\in\Fcalall$, we define $f^{i(x)}$ as
the function obtained from $f$ by acting with $x^{-1}$ on the $q$-expansion coefficients,
 \item for~$A\in \GSp(\QQ)^+$, $f\in\Fcalall$, $\tau\in\HH_g$, we have $f^A(\tau) = f(A \tau)$,
 \item for any~$N$, the group
$\wascalledS{} = \{A\in \GSp(\ZZhat) : A \equiv 1\modstar N\}\times \GSp(\RR)^+$
acts trivially on the subfield~$\FcalN$,
where we write $A\equiv 1\modstar N$ if and only if
for all $p\mid N$ we have $A_p\in  1 + N\matrixring{2g}{\ZZ_p}$.
 \end{enumerate}
 \end{proposition}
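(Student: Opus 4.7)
The plan is to deduce the proposition from Shimura's adelic reciprocity law (for instance as stated in \cite[26.10]{shimura}, which provides a right action of $\GSp(\AA)^+$ on $\Fcalall$ via automorphisms) by first checking that Shimura's action satisfies properties 1--3, and then using strong approximation to show that any homomorphism satisfying these three properties is uniquely determined.

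For existence, I would quote Shimura's adelic action of $\GSp(\AA)^+$ on $\Fcalall$ verbatim, being careful with the convention mismatch flagged in the footnote (namely $i(t) = \iota(t)^{-1}$). Property~2 is the statement that $\GSp(\QQ)^+$ acts by fractional linear transformations on $\HH$, which is built into Shimura's construction. Property~1 is the formula for the Galois action of $\ZZhat^\times$ (viewed inside $\AA^\units$) on $q$-expansion coefficients, which is a standard ingredient of Shimura's canonical model; this is where one must check that our $i$ agrees with Shimura's $\iota^{-1}$, so the sign of the exponent on $\zeta_N$ is correct. Property~3 is the statement that the action on $\FcalN$ factors through $\GSp(\AA)^+ / S$, which encodes the level-$N$ structure and is proved by showing that elements of the principal congruence subgroup $\Gamma_N$ act trivially on $\FcalN$.

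For uniqueness, suppose $\alpha_1,\alpha_2 : \GSp(\AA)^+\to \mathrm{Aut}(\Fcalall)$ both satisfy 1--3. It suffices to show that the subgroups singled out by 1--3 generate $\GSp(\AA)^+$ up to subgroups acting trivially on every $\FcalN$. Given $A\in\GSp(\AA)^+$, first use $\AA^\units = \QQ^\units \cdot \ZZhat^\units \cdot \RR_{>0}$ to write $\nu(A) = q u r$ with $q\in\QQ^\units_{>0}$, $u\in\ZZhat^\units$, $r\in\RR_{>0}$; then $A\cdot i(qur)^{-1}\in\Sp(\AA)^+$. Apply strong approximation for $\Sp_{2g}$ to decompose this as $\gamma\cdot A'$ with $\gamma\in\Sp(\QQ)$ and $A'\in\Sp(\ZZhat)\times\Sp(\RR)^+$, and, given a target level $N$, approximate so that the finite part of $A'$ lies in $\{B\in\Sp(\ZZhat) : B\equiv 1 \modstar N\}$. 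Together with the $i((\ZZ/N\ZZ)^\units)$-factor from $u$ mod $N$, this expresses the action of $A$ on $\FcalN$ as a product of elements on which $\alpha_1$ and $\alpha_2$ must agree by 1--3, proving $\alpha_1(A)|_{\FcalN} = \alpha_2(A)|_{\FcalN}$ for every $N$.

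The main obstacle is not any single step but the bookkeeping required to align our normalisations with Shimura's: the inversion relating $i$ and $\iota$, the positivity conventions on $\nu$ and on the $\RR$-component, and the identification of $S$ with the subgroup denoted by Shimura in~\cite[26.10]{shimura} as acting trivially on level-$N$ functions. Once these conventions are reconciled, both existence and uniqueness reduce to citations plus the strong approximation decomposition above, and Proposition~\ref{prop:groupactions} follows by restricting this adelic action to $\GSp(\QQ)^+$, $\Sp(\ZZ)$, $\Sp(\ZZ/N\ZZ)$ and $i((\ZZ/N\ZZ)^\units)$ respectively, using property~3 to descend to level $N$.
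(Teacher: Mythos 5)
The paper's own proof is a pure citation: existence is a special case of \cite[Thm.~5(v,vi,vii)]{shimura-fourier}, and uniqueness is attributed to the proof of \cite[Proposition~1.3]{shimura-reciprocity-theta}; Remark~\ref{rem:aboutproof} then tracks the pieces back to Shimura's earlier articles. Your proposal handles existence the same way but actually writes out a uniqueness argument via strong approximation, which is indeed the content of Shimura's argument that the paper merely references (the paper later quotes \cite[Lemma~1.1]{shimura-reciprocity-theta}, which is exactly the strong approximation decomposition). So the two routes are morally the same, with your version filling in detail the paper leaves to the literature.

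There is, however, a genuine gap in the order of your decomposition. You write $A\cdot i(qur)^{-1} = \gamma\cdot A'$ with $\gamma\in\Sp(\QQ)$ and $A'_\mathrm{f}$ in the level-$N$ congruence subgroup of $\Sp(\ZZhat)$, so $A = \gamma\, A'\, i(qur)$. Because this is a \emph{right} action, $f^A = \bigl((f^\gamma)^{A'}\bigr)^{i(qur)}$: the $\gamma$ acts first. But for $\gamma\in\Sp(\QQ)$, the function $f^\gamma$ generally leaves $\FcalN$ and lands in some $\Fcal{M}$ with $M$ a proper multiple of $N$, and then $A'\equiv 1\modstar N$ is not enough for property~3 to force $(f^\gamma)^{A'} = f^\gamma$. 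The fix is to decompose in the opposite order, $A = A'\cdot\gamma\cdot i(qur)$ (strong approximation supplies this too, since both the congruence subgroup and $\Sp(\QQ)$ are closed under inversion): then $A'$ acts first on $f\in\FcalN$ and is killed by property~3, after which property~2 determines the action of $\gamma$ and $i(q)$, and property~1 and property~3 determine the action of $i(u)$ and $i(r)$. (Alternatively you could approximate $A'$ to a level $M$ large enough to dominate the level of $f^\gamma$, but this requires a uniform bound on that level in terms of the denominators of $\gamma$, which is more bookkeeping than the reordering.) A more cosmetic point: the identity $\AA^\units = \QQ^\units\cdot\ZZhat^\units\cdot\RR_{>0}$ should be for the positive id\`eles, but you implicitly fix this by taking $q\in\QQ^\units_{>0}$; and the ``$i((\ZZ/N\ZZ)^\units)$-factor from $u$ mod $N$'' is a consequence of properties 1 and 3 rather than one of the listed properties itself.
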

\begin{proof}
Existence is a special case of~\cite[Thm.~5(v,vi,vii)]{shimura-fourier}.
Uniqueness follows from the proof of~\cite[Proposition~1.3]{shimura-reciprocity-theta}.
\end{proof}
\begin{remark}\label{rem:aboutproof}
Our reference for existence in Proposition~\ref{prop:biggroupaction},
though directly applicable to our situation,
may not be satisfactory to some readers,
as the paper
does not contain the full proof.
Therefore, just like~\cite{shimura-fourier}, we give some pointers for the proof.
The action is constructed in~\cite[Section 2.7]{shimura-models-I}
for a field $k_S(V_S)$.
The field $k_S(V_S)$ is defined without $q$-expansions,
hence that reference only contains a weak version of~(1),
but (2) is \cite[(2.7.2)]{shimura-models-I} and (3) follows immediately
from \cite[{(2.5.3$_\mathrm{a}$)}]{shimura-models-I}.
Our stronger version of~(1), as well as the link between 
$\Fcalall$ and~$k_S(V_S)$,
is given in~\cite{shimura-fourier}.
Both that reference and \cite[\S6]{shimura-arithmetic}
claim that the proof is exactly the same as in the Hilbert modular case,
which is~\cite{shimura-arithmetic}.
\end{remark}

The following corollary proves exactly Proposition~\ref{prop:groupaction}.

\begin{corollary}\label{cor:zzmod}
The action of Proposition~\ref{prop:biggroupaction}
has the following property:
\begin{enumerate}
\setcounter{enumi}{3}
\item For any positive integer $N$, any $f\in\FcalN$, and any $$A=(A_{\mathrm{f}}, A_{\infty})\in\GSp(\ZZhat)\times \GSp(\RR)^+ \subset \GSp(\AA)^+,$$ we have $f^A\in\FcalN$,
and $f^A$ depends only on $(A_{\mathrm{f}}\mod N)\in\GSp(\ZZ/N\ZZ)$. Moreover, the induced action of~$\GSp(\ZZ/N\ZZ)$ on $\FcalN$
is exactly as in Proposition~\ref{prop:groupaction}.
\end{enumerate}
\end{corollary}
\begin{proof}
The inclusion $f^A\in\FcalN$ follows from the construction
of the action (see \cite[(2.5.3)]{shimura-models-I} and~\cite{shimura-fourier}).
That~$f^A$ depends only on $(A_{\mathrm{f}} \mod N)$ is Proposition~\ref{prop:biggroupaction}(3).
It follows that the action induces an action of~$\GSp(\ZZ/N\ZZ)$ on~$\FcalN$.
To prove that this action is as in Proposition~\ref{prop:groupaction},
it remains only to compute this action for~$B\in \Sp(\ZZ/N\ZZ)$ and for~$B=i(t)$ with $t\in(\ZZ/N\ZZ)^\units$.

In the case~$B\in\Sp(\ZZ/N\ZZ)$, we lift $B$ to $A'\in \Sp(\ZZ)$
(possible by \cite[Theorem~VII.21]{newman}),
and we get $f^B = \smash{f^{A'}}$ by Proposition~\ref{prop:biggroupaction}(3).
As we have $$\Sp(\ZZ) = \GSp(\QQ)^+ \cap (\GSp(\ZZhat)\times \GSp(\RR)^+),$$
we can then apply Proposition~\ref{prop:biggroupaction}(2)
to get that $f^B$ is as in Proposition~\ref{prop:groupaction}.

In the case~$B=i(t)$ with $t\in(\ZZ/N\ZZ)^\units$, we lift~$t$ to $x\in \ZZhat^\units$ and apply 
Proposition~\ref{prop:biggroupaction}(1), which gives
$f^B = f^{i(x)} = f^{x^{-1}} = f^t$,
so that again $f^B$ is as in Proposition~\ref{prop:groupaction}.
Here, the switch from $x^{-1}$ to $t$ is explained by the usual map
from the id\`ele class group to the ray class group:
starting from $x\in \ZZhat^\units$, we take $c\in \QQ^\units\cap \ZZ$
with $t = (c\bmod N)$
to get $cx\in \QQhat^\units$ that is $1$ modulo $N$ and in the same id\`ele class.
Then $cx$ in turn maps to the class of the fractional ideal $(c)$ in the ray class group,
which acts as~$t$ on $\QQ(\zeta_N)$.
\end{proof}

 Let $\tau = \tau(\Phi, B)\in \HH_g$ be a primitive CM point
 for the CM field~$K$.

The type norm $N_{\reflextype}$ and the map
$\epsilon : a\mapsto \leftmult{a}{B}{B}$ induce
ad\`elic maps $N_{\reflextype}:\reflexid\rightarrow \Kid$
and $\epsilon: \Kid\rightarrow \GL_{2g}(\AA)$
and the composite map sends $\reflexid$ to $\GSp(\AA)^+$.
Shimura gives the following reciprocity law, stated
in a very sleek manner using
the action of Proposition~\ref{prop:biggroupaction}.
\begin{theorem}[Shimura]\label{thm:shimrecipad}
Let $\tau$ and the notation be as above.
Then for every $f\in\Fcalall$ such that~$f(\tau)$ is finite and every $x\in \reflexid$, we have
$$f(\tau) \in \reflexab\quad\mbox{and}\quad f(\tau)^x = f^{\matrixofShimura{x}^{-1}}(\tau).$$
\end{theorem}
\begin{proof}
This is equation (3.43) of  \cite[p.~57]{shimura-reciprocity-theta}
up to two minor modifications.

First of all, that reference assumes that
the abelian variety $A=\CC^g/(\tau\ZZ^n+\delta \ZZ^n)$
for an integer $\delta\geq 3$ has CM,
but that variety has CM by $K$ if and only if ours has.

Secondly, 
the matrix $\epsilon(a)$ is defined differently
in~\cite{shimura-reciprocity-theta}, namely for $a\in K$
by the (less computationally convenient) identity of complex matrices
\begin{equation}\label{eq:epsilonShim}\rho(a)(\tau, 1_{g\times g}) = (\tau, 1_{g\times g})\epsilon(a)\transpose
\end{equation}
where $\rho(a)\in\matrixring{g}{\CC}$ is the matrix of $a\in K = \End(A)\otimes \QQ$
with respect to the standard basis of~$\CC^g$.
We now check that our
matrix $\epsilon(a) = \leftmult{a}{B}{B}$ also satisfies~\eqref{eq:epsilonShim}.
We have
$a B = \leftmult{a}{B}{B} B$, which by taking the transpose and
applying $\Phi$ leads to
$$\mathrm{diag}(\Phi(a)) (\Phi(b_1),\ldots,\Phi(b_{2g})) = (\Phi(b_1),\ldots,\Phi(b_{2g}))(\leftmult{a}{B}{B})\transpose.$$
The change of basis $(\Phi(b_{g+1}),\ldots, \Phi(b_{2g}))^{-1}$
yields~\eqref{eq:epsilonShim} for $\epsilon(a) = \leftmult{a}{B}{B}$.
\end{proof}

 
\begin{remark}
Recent work of Hertogh~\cite{hertogh, hertoghcode} provides a computer implementation
of ad\`eles and id\`eles.
It would be interesting to see whether this allows one to
use Theorem~\ref{thm:shimrecipad} directly
in a practical way on a computer, and whether this
can be made to work as well in practice as our main theorems.
\end{remark}

   \section{Proof of the explicit reciprocity law} \label{sec:proof}

   In this section, we prove our explicit version
   of Shimura's reciprocity law,
   using Shimura's ad{\`e}lic version (Theorem~\ref{thm:shimrecipad}).
   
The bridge between ad\`elic and ideal theoretic class field theory
is the surjection
\begin{equation}\label{eq:idtoid}
	\reflexid/ \reflexunits 
	\rightarrow \mathrm{Cl}(NF) = \IKr{NF} / \PKr{NF}\end{equation}
that maps the class of an id\`ele $x\equiv 1\modstar NF$ to
the class of the ideal $\mathfrak{a}$ with $\ord_{\mathfrak{p}}(\mathfrak{a}) = \ord_{\mathfrak{p}}(x_{\mathfrak{p}})$.

   Given $f\in\FcalN$ and an id\`ele $x\in \reflexid$, let $[\mathfrak{a}]$
   be the image of $x$ under the map~\eqref{eq:idtoid}.
   By Theorem~\ref{thm:shimrecipad}, we have
   $f(\tau)^{[\mathfrak{a}]} = f(\tau)^x=f^{\matrixofShimura{x}^{-1}}(\tau)$,
   and our goal is to express this in terms of~$\mathfrak{a}$.
  To do so, we
  write $\matrixofShimura{x}^{-1}
   = S\ U\ M$ with $M\in\GSp(\QQ)^+$, $U\in\GSp(\ZZhat)$, $S\in\Stab_f$,
  and both $M$ and $(U\ \text{mod}\ N)$ explicit in terms of~$\mathfrak{a}$.
  Then we can conclude $f^{\mathfrak{a}}(\tau) =
   f^{(U\ \text{mod}\ N)}(M\tau)$,
   by
  Theorem~\ref{thm:shimrecipad}.
  \begin{remark}
  The strong approximation theorem for~$\GSp(\AA)$
  in fact tells us 
  that such a decomposition always exists, even with $U\in i(\smash{\ZZhat^\units})$
  (\cite[Lemma~1.1]{shimura-reciprocity-theta}).
  However, as in the genus-one case~\cite{gee-stevenhagen},
  we will be satisfied with having only
  $U\in \GSp(\smash{\ZZhat})$.
  In fact, by allowing $U\in\GSp(\smash{\ZZhat})$, we can make sure that $M\tau$
  is in a fundamental domain for $\Sp(\ZZ)$, which improves the speed
of convergence in practical computations.
  \end{remark}
  

\subsection{Coprimality and congruence for fractions}

To help in translating ad{\`e}lic statements to more
concrete statements, we first state
some equivalent definitions
of ``$\!\modstar{}\!\!$'' that we will use.
This is not new, but statements that apply to non-maximal
orders are rare in the literature, so we give a detailed statement and proof.

\newcommand{\Op}{\mathcal{O}_{(p)}}
Let $\mathcal{O}$ be an order in $K$ and let $F\in\ZZ$ be
the smallest
positive integer such that $\mathcal{O}\supset F\mathcal{O}_K$.
For any prime number $p\in\ZZ$, let
\[\Op = \{a/b\in K : a\in\mathcal{O}, b\in\ZZ\setminus p\ZZ\}.\]
In this section, for $a\in\mathcal{O}$, we
use the notation
$\overline{a}=(a\ \mathrm{mod}\ NF\mathcal{O})\in (\mathcal{O}/NF\mathcal{O})$.

\begin{definition}\label{def:equivalence}
	Let $N$ be a positive integer. We say that an element
	$x \in K^{\units}$ is \emph{coprime to $NF$ with respect to $\mathcal{O}$} if one of the following equivalent
	conditions holds (equivalence is proven below):
	\begin{enumerate}
		\item $x=a/b$ for some $a,b\in\mathcal{O}$ 
		with $\overline{a}, \overline{b} \in(\mathcal{O}/NF\mathcal{O})^{\units}$
		and $b\not=0$,
		\item $x = a/b$ for some $a\in\mathcal{O}$
		and $b\in\ZZ\setminus\{0\}$ with
		$\overline{a}\in(\mathcal{O}/NF\mathcal{O})^{\units}$
		and
		$b-1\in NF\ZZ$,
		\item for all prime numbers $p\mid NF$, we have
$x \in \Op^{\units}$,
		\item
		$x\mathcal{O} = \mathfrak{a}\mathfrak{b}^{-1}$
		for non-zero $\mathcal{O}$-ideals $\mathfrak{a}$ and $\mathfrak{b}$
		that are coprime to $NF$ in the sense that
		$\mathfrak{a}+NF\mathcal{O}=\mathfrak{b}+NF\mathcal{O}=\mathcal{O}$.
	\end{enumerate}
	We write $x \equiv 1\modstar N\mathcal{O}$ to mean
	that one of the following equivalent conditions holds
   (equivalence is proven below):
	\begin{itemize}
		\item[(1')] as in (1) above, with additionally
		$a-b\in N\mathcal{O}$,
		\item[(2')] as in (2) above, with additionally
$a-1\in N\mathcal{O}$,
		\item[(3')] as in (3) above, with additionally
		$x -1 \in N\Op$ for all prime numbers $p\mid N$.
	\end{itemize}
	In terms of $p$-adic numbers, both 
	$\mathcal{O}\otimes \ZZ_p$
	and $K$ are subrings of $\mathcal{O}\otimes \QQ_p$,
	and their intersection is exactly $\Op$.
	In particular, the conditions (3) and (3') can equivalently
	be written with $\mathcal{O}\otimes\ZZ_p$ instead of $\Op$.
\end{definition}

\begin{proof}[Proof of equivalence in Definition~\ref{def:equivalence}]
We start with the equivalence of (1)--(4).\\
$(2)\Rightarrow (1)$ is obvious.\\
$(1)\Rightarrow (2)$.
Let $N(b) = \#(\mathcal{O}/b\mathcal{O})$.
We start by showing that $NF$ is coprime to $N(b)$ and that $N(b)$ is an $\mathcal{O}$-multiple of~$b$.

We have $1\in b\mathcal{O}+NF\mathcal{O}$,
so $NF$ is a unit modulo $b\mathcal{O}$,
hence multiplication by $NF$ is invertible on the additive group $(\mathcal{O}/b\mathcal{O})$
of order $N(b)$, so $NF$ is coprime to~$N(b)$.
Note that $N(b)$ annihilates the group $\mathcal{O}/b\mathcal{O}$,
hence $N(b)\in b\mathcal{O}$, so $N(b)$ is a multiple of~$b$.

Let $c\in\ZZ$ be such that
$b':=cN(b)\equiv 1\pmod{NF}$.
We get $x=a'/b'$ with $a'=acN(b)/b\in\mathcal{O}$ and $\overline{a'}\in(\mathcal{O}/NF\mathcal{O})^\units$.
\\
$(1)\Rightarrow (3)$. Suppose that $x$ satisfies~$(1)$.
Then $x^{-1}$ also satisfies~$(1)$.
By ``$(1)\Rightarrow (2)$'', we then get that both $x$ and $x^{-1}$ satisfy~$(2)$.
By the definition of $\mathcal{O}_{(p)}$, we then get
$x, x^{-1}\in\mathcal{O}_{(p)}$,
hence $x\in\mathcal{O}_{(p)}^{\units}$.\\
$(3)\Rightarrow (4)$. If $NF=1$, then this is trivial, so suppose~$NF>1$.

For every prime $p\mid NF$, write $x = a_p/b_p$ and $x^{-1} = c_p/d_p$ with $a_p, c_p\in\mathcal{O}$
and $b_p,d_p\in\ZZ\setminus p\ZZ$.
Let $\mathfrak{b} = \sum_{p} b_p\mathcal{O}\subset\mathcal{O}$,
 $\mathfrak{d} = \sum_{p} d_p\mathcal{O}\subset\mathcal{O}$,
$\mathfrak{a} = \mathfrak{b}x = \sum_{p} a_p\mathcal{O}\subset \mathcal{O}$,
and $\mathfrak{c} = \mathfrak{d}x^{-1} = \sum_{p} c_p\mathcal{O}\subset \mathcal{O}$.
We get $\mathfrak{a}\mathfrak{c} = \mathfrak{b}\mathfrak{d}$.
The ideals $\mathfrak{b}$ and $\mathfrak{d}$ are coprime to all 
prime numbers $p\mid NF$ because of $b_p\in \mathfrak{b}$
and $d_p\in \mathfrak{d}$.
It follows that $\mathfrak{a}\mathfrak{c} = \mathfrak{b}\mathfrak{d}$ is coprime to $NF\mathcal{O}$.
In particular, both $\mathfrak{a}$ and $\mathfrak{b}$ are coprime to $NF\mathcal{O}$,
hence $\mathfrak{b}$ is invertible and we have $x\mathcal{O} = \mathfrak{a}\mathfrak{b}^{-1}$.\\
$(4)\Rightarrow (1)$. 
Suppose $x = \mathfrak{a}\mathfrak{b}^{-1}$
	with $\mathfrak{a}$ and $\mathfrak{b}$ non-zero ideals of
	$\mathcal{O}$ coprime to $NF\mathcal{O}$.
	Then $\mathfrak{a}$ and $\mathfrak{b}$ are both invertible
	and we have $x\mathfrak{b}=\mathfrak{a}$.
	We have $\mathfrak{b} + NF\mathcal{O} = \mathcal{O}$, hence there exists
	a $b\in\mathfrak{b}$ with $b\equiv 1\ (\mathrm{mod}\ NF\mathcal{O})$.
	Take a non-zero such~$b$.
	Let $a = bx$. Then $a\mathcal{O}=(b\mathfrak{b}^{-1})\mathfrak{a}$
	is coprime to $NF\mathcal{O}$.
	This proves~(1).
	
We have now proved that (1)--(4) are equivalent.
It remains to prove that (1')--(3') are equivalent.
Note that each ($n$') implies ($n$),
so we may and will assume that (1)--(4) hold.
Write $x=a/b$ as in (1) and $x=a'/b'$ as in~(2).
We have $ab'=a'b$, hence $b'(a-b) = b(a'-b')$.
As $b$ and $b'$ are invertible in $\mathcal{O}/NF\mathcal{O}$,
we get
$(1')\Leftrightarrow b'(a-b)\in N\mathcal{O}
\Leftrightarrow b(a'-b')\in N\mathcal{O}\Leftrightarrow (2')$.

Next, we have $b'(x-1) = (a'-1) - (b'-1)$
and for all $p\mid N$ we have
$b'\in\smash{\mathcal{O}_{(p)}^{\units}}$
and $b'-1\in NF\ZZ\subset N\mathcal{O}_{(p)}$.
In particular, we have (3')
if and only if for all $p\mid N$ we have $a'-1\in N\mathcal{O}_{(p)}$.
We also have $a'-1\in\mathcal{O}$ and
$\cap_{p\mid N} N\mathcal{O}_{(p)}\cap \mathcal{O} = N\mathcal{O}$,
so (3')$\Leftrightarrow$(2').
\end{proof}


  \subsection{The conductor}\label{sec:conductor}

We now prove the first statement in Theorem~\ref{thm:general}:
that $f(\tau)$ lies in the ray class field for the modulus~$NF$.
In other words, we prove that the extension
$\cmext{N}=\reflexfield(f(\tau) : f\in\FcalN)$ of~$\reflexfield$,
which is abelian by Theorem~\ref{thm:shimrecipad},
has conductor dividing~$NF$.

As in Section~\ref{sec:statement},
let $\mathfrak{b}$ be a fractional $\mathcal{O}$-ideal
with $\End(\mathfrak{b})=\mathcal{O}$
and let $F$ be the smallest positive integer such that
$F\mathcal{O}_K\subset\mathcal{O}$.
Let $B$ be a $\ZZ$-basis of $\mathfrak{b}$.
\begin{lemma}\label{lem:epsiloninteger}
For~$a\in K^\units$, we have $a\in\mathcal{O}$ if and only if $\leftmult{a}{B}{B}\in\matrixring{2g}{\ZZ}$.
\end{lemma}
\begin{proof}
We have $a\in\mathcal{O}$ if and only if $a\mathfrak{b}\subset\mathfrak{b}$,
which is equivalent to $\leftmult{a}{B}{B}\in\matrixring{2g}{\ZZ}$.
\end{proof}

\begin{lemma}\label{lem:epsilonmodstar}
For~$a\in K$, we have $a\equiv 1\modstar N\mathcal{O}$ if and only if
the following two conditions hold:
\begin{enumerate}
	\item we have $\leftmult{a}{B}{B}\in \GL_{2g}(\ZZ_p)$
	for all $p\mid NF$, and
	\item the coefficient-wise reduction modulo $N$ of 
$\leftmult{a}{B}{B}$
is the identity matrix.
\end{enumerate}
\end{lemma}
\begin{proof}
Lemma~\ref{lem:epsiloninteger} and its proof stay
valid when considered locally at a prime number~$p$,
that is,
replacing $\mathcal{O}$ by
 $\mathcal{O}_{(p)}$
and $\ZZ$ by $\ZZ_{(p)}=\QQ\cap \ZZ_p$ for a prime~$p$.
By Definition~\ref{def:equivalence}(3'), 
we have $a\equiv 1\modstar N\mathcal{O}$ if and only if
$(a-1)/N\in 
\mathcal{O}_{(p)}$ for all~$p\mid N$
and $a\in\smash{\mathcal{O}_{(p)}^\units}$ for all $p|NF$.
The result follows if we apply Lemma~\ref{lem:epsiloninteger}
to $(a-1)/N$ locally at all primes dividing~$N$ and to $a$
and $a^{-1}$ at all primes dividing~$NF$.
\end{proof}

\begin{proposition}\label{prop:conductor}
The conductor of $\cmext{N}$ divides~$NF$.
\end{proposition}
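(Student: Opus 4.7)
The plan is to invoke Shimura's reciprocity (Theorem~\ref{thm:shimrecipadeles}) together with the triviality part of Proposition~\ref{prop:biggroupaction}. It suffices to show that every idele $x\in\reflexideles$ with $x_\infty=1$, with $x_v\in\OKr_v^\times$ at finite $v\nmid NF$, and with $x_v\in 1+NF\OKr_v$ at finite $v\mid NF$, acts trivially on $\cmext{N}$. By Theorem~\ref{thm:shimrecipadeles} this action sends $f(\tau)$ to $f^{g(x)^{-1}}(\tau)$ with $g=\epsilon\circ N_{\reflextype}$, and by Proposition~\ref{prop:biggroupaction}.3 the group $S=\{A\in\GSp(\ZZhat):A\equiv 1\modstar N\}\times\GSp(\RR)^+$ acts trivially on $\FcalN$, so the claim reduces to $g(x)\in S$.

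The main calculation is to show that $\mu:=N_{\reflextype}(x)\in\Kideles$ satisfies $\mu\equiv 1\modstar N\mathcal{O}$ locally at every finite prime. At a rational prime $p\mid NF$, writing $x_v=1+NF\alpha_v$ with $\alpha_v\in\OKr_v$ at each place $v\mid p$ and expanding the product defining $N_{\reflextype}$ yields $\mu_p=1+NF\beta_p$ for some $\beta_p\in\mathcal{O}_K\otimes\ZZ_p$. Using $F\mathcal{O}_K\subset\mathcal{O}$: if $p\mid N$ then $NF\beta_p\in N(\mathcal{O}\otimes\ZZ_p)$, so $\mu_p\equiv 1\pmod{N(\mathcal{O}\otimes\ZZ_p)}$; if $p\mid F$ then $NF\beta_p\in F\mathcal{O}_K\otimes\ZZ_p\subset\mathcal{O}\otimes\ZZ_p$, and $\mu_p$ is a unit in $\mathcal{O}\otimes\ZZ_p$ because it is $1$ plus an element of the proper ideal $F\mathcal{O}_K\otimes\ZZ_p$. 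At primes $p\nmid NF$, $x_p$ is already a local unit, hence $\mu_p$ is a unit in $\mathcal{O}_K\otimes\ZZ_p=\mathcal{O}\otimes\ZZ_p$.

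With this in hand, Corollary~\ref{cor:epsilon} applied prime by prime yields $\epsilon(\mu)_p\in\GL_{2g}(\ZZ_p)$ everywhere and $\epsilon(\mu)_p\equiv 1\pmod N$ whenever $p\mid N$; the symplectic property of $\epsilon(\mu)$ holds because $\mu\overline{\mu}=N_{\reflexfield/\QQ}(x)\in\AA^\times$ (using that $\reflextype\sqcup\overline{\reflextype}$ is the full set of embeddings of $\reflexfield$); at infinity $\epsilon(\mu)_\infty=I\in\GSp(\RR)^+$. Hence $g(x)=\epsilon(\mu)\in S$, so $g(x)^{-1}\in S$ acts trivially on $\FcalN$, and $f(\tau)^x=f(\tau)$ for all $f\in\FcalN$, proving the proposition.

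The main obstacle is the case-by-case bookkeeping of the congruences at primes dividing only $N$, only $F$, both, or neither, using the relation $F\mathcal{O}_K\subset\mathcal{O}$ at exactly the right spot to translate between congruences in the maximal order $\mathcal{O}_K$ and in the (possibly non-maximal) order $\mathcal{O}$. Once the congruence $\mu\equiv 1\modstar N\mathcal{O}$ is established, everything else is a direct application of the lemmas and propositions already in place.
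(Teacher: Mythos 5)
Your proposal follows the same route as the paper's proof: reduce the statement to showing that the ray group $W_{NF}$ acts trivially on $\FcalN$, apply Theorem~\ref{thm:shimrecipadeles} to translate that into the condition $g(x)^{-1}\in S$, and then check $N_{\reflextype}(x)\equiv 1\modstar N\mathcal{O}$ so that Corollary~\ref{cor:epsilon} finishes the job. The paper simply asserts the implication $N_{\reflextype}(x)\equiv 1\modstar NF\mathcal{O}_K \Rightarrow N_{\reflextype}(x)\equiv 1\modstar N\mathcal{O}$ without further comment, whereas you spell out the prime-by-prime bookkeeping; that is the only substantive difference.

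One small imprecision: at primes $p\mid F$ you justify that $\mu_p$ is a unit of $\mathcal{O}\otimes\ZZ_p$ by saying it is ``$1$ plus an element of the proper ideal $F\mathcal{O}_K\otimes\ZZ_p$.'' Being $1$ plus an element of a proper ideal does not in general make an element a unit; what you need is that $F\mathcal{O}_K\otimes\ZZ_p$ lies in the Jacobson radical of $\mathcal{O}\otimes\ZZ_p$. This does in fact hold here, e.g. because $(F\mathcal{O}_K\otimes\ZZ_p)^2 = F\cdot(F\mathcal{O}_K\otimes\ZZ_p)\subset p(\mathcal{O}\otimes\ZZ_p)$ when $p\mid F$, so the image of $F\mathcal{O}_K\otimes\ZZ_p$ in $(\mathcal{O}\otimes\ZZ_p)/p$ is nilpotent; alternatively, the geometric series $\sum_{k\ge 0}(-NF\beta_p)^k$ converges to $\mu_p^{-1}$ with all nonconstant terms landing in $F\mathcal{O}_K\otimes\ZZ_p\subset\mathcal{O}\otimes\ZZ_p$. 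With one of these supporting observations added, the argument is complete and matches the paper's.
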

\begin{proof}
What we need to prove is equivalent to the statement that
the kernel
$$W_{NF}=\{x K^\units \in \reflexid/K^\units :
  x\equiv 1 \modstar NF\mathcal{O}_{\reflexfield}, \forall_{\mathfrak{p}}\ord_{\mathfrak{p}}(x)=0\}$$
of \eqref{eq:idtoid}
acts trivially on all $f\in \FcalN$.
So take $x$ with these properties and let
$y=(\leftmult{N_{\reflextype}(x)}{B}{B})^{-1}$.
Then Theorem~\ref{thm:shimrecipad} tells us 
$f(\tau)^x = f^y(\tau)$.

We have $N_{\reflextype}(x)\equiv 1\modstar NF\mathcal{O}_K$,
hence $N_{\reflextype}(x)\equiv 1\modstar N\mathcal{O}$.
Then by 
Lemma~\ref{lem:epsilonmodstar}
and another local application of Lemma~\ref{lem:epsiloninteger}
(this time to $p\nmid N$), we find
that~$y$ is in the group $\wascalledS{} = \{A\in \GSp(\ZZhat) : A \equiv 1\modstar N\}\times \GSp(\RR)^+$,
which acts trivially on $f$ by Proposition~\ref{prop:biggroupaction}.
So we get $f^y=f$, hence $f(\tau)^x = f^y(\tau) = f(\tau)$.
\end{proof}

\subsection{Changes of symplectic bases}\label{sec:mundane}

The next statement in Theorem~\ref{thm:general} is that the matrix~$M=\leftchange{C}{B}$
is in the group~$\GSp(\QQ)^+$.
Lemma~\ref{lem:mundanenew} proves this claim.

\begin{lemma}\label{lem:premundane}
For a field $F$ and matrix $M \in \GL_{2g}(F)$,
the following are equivalent:
	\begin{enumerate}[(1)]
	\item there exists $y\in F^\units$ such that $y M \Omega M\transpose = \Omega$,
	\item $M\in\GSp(F)$.
\end{enumerate}
If this is the case, then $\nu(M) = y^{-1}$.
\end{lemma}
\begin{proof}
	Statement (2) means $M\transpose \Omega M = \nu(M) \Omega$
	for some $\nu(M)\in F^\units$.
	By taking inverses and observing $\Omega^{-1} = -\Omega$,
	we see that this is equivalent to
	$M^{-1}\Omega (M\transpose)^{-1} = \nu(M)^{-1}\Omega$.
	Multiplying on the left by $M$ and on the right by $M\transpose$
	shows that this is equivalent to~(1), with $y = \nu(M)^{-1}$.
\end{proof}
\begin{lemma}\label{lem:mundanenew}
	Given $\tau = \tau(\Phi, \mathfrak{b}, \xi, B)$ and $M\in \GL_{2g}(\QQ)$,
	let $\mathfrak{c}$ be the subgroup of $K$ generated by $C = MB$ and let $E = E_{\xi}$.
	Then the following are equivalent:
	\begin{enumerate}[(1)]
		\item there exists $y\in\QQ^\units$ such that $yE$ is a principal polarization for $\mathfrak{c}$
		and $C$ is a symplectic basis of $\mathfrak{c}$ for~$yE$,
		\item $M\in\GSp(\QQ)^{+}$.
	\end{enumerate}
    Moreover, if this is the case, then we have
    \begin{enumerate}[(a)]
    	\item $\nu(M) = y^{-1}$, and
    	\item $\tau(\Phi, \mathfrak{c}, y\xi, C) = M\tau$.
    \end{enumerate}
\end{lemma}
\begin{proof}
	Since $C$ is a basis of~$\mathfrak{c}$, statement (1)
	is statement (1) of Lemma~\ref{lem:premundane} together with
	positive-definiteness of 
	$(u,v)\mapsto yE(iu,v)$.
	This positive-definiteness is equivalent to $y>0$, hence
	Lemma~\ref{lem:premundane} gives equivalence of (1) and~(2), as well as~(a).

Let $\tau' = \tau(\Phi, \mathfrak{c}, y\xi, C)$.
It remains to show $\tau' = M\tau$.
Write $M = (a,b; c, d)$ for $g\times g$ blocks $a,b,c,d$.
Write
$B = (b_1,\ldots, b_{2g})$, and
take the $g\times 2g$ matrix $\mathcal{B}=(B_1|B_2) = (\Phi(b_1)\mid\ldots\mid \Phi(b_{2g}))$,
and similarly define $\mathcal{C}$ using~$C$.
We have $C = MB$, hence $\mathcal{C} = \mathcal{B} M\transpose
= (B_1a\transpose+B_2b\transpose|B_1c\transpose+B_2d\transpose)$.
This gives
$\tau' = (B_1c\transpose+B_2d\transpose)^{-1} (B_1a\transpose+B_2b\transpose)$.
As $\tau$ and $\tau'$ are symmetric, we get $\tau = B_1\transpose (B_2\transpose)^{-1}$
and $\tau' = (aB_1\transpose+bB_2\transpose)(cB_1\transpose+dB_2\transpose)^{-1} = (a\tau + b) (c\tau + d)^{-1} = M\tau$.
\end{proof}

\subsection{Decomposing \texorpdfstring{$\matrixofShimura{x}$}{\matrixofShimurapdfstring{x}} modulo the stabilizer}\label{ssec:decomp}

Let us recall the situation of the theorem we
are proving (Theorem~\ref{thm:general}):
we have a fractional $\OKr$-ideal $\mathfrak{a}$ coprime to~$NF$,
a symplectic basis $B = (b_1,\ldots,b_g)$
of $\mathfrak{b}$ with respect to $E_{\xi}$,
and a symplectic basis $C = (c_1,\ldots,c_g) = BM\transpose$
of $\NPhirO(\mathfrak{a})^{-1}\mathfrak{b}$
with respect to $E_{N(\mathfrak{a})\xi}$.
Here $M = \leftchange{C}{B}$.

In order to compute the action of the ray class $[\mathfrak{a}]$ of $\mathfrak{a}$
modulo $NF$, we choose an id\`ele $x$ whose class maps to~$[\mathfrak{a}]$.
To be precise, we choose $x\in \reflexid$
such that
\begin{enumerate}
\item for every prime ideal $\mathfrak{p}\mid NF$ we have
$x_{\mathfrak{p}}=1$,
\item for all other prime ideals $\mathfrak{p}$ we have
$\ord_{\mathfrak{p}}(x_{\mathfrak{p}})=\ord_{\mathfrak{p}}(\mathfrak{a})$.
\end{enumerate}

Then by Lemma~\ref{lem:epsilonmodstar} we have
\begin{equation}\label{eq:trivialbutimportant}
\matrixofShimura{x} \equiv 1_{2g} \modstar NF,
\end{equation}
with $\epsilon : a\mapsto \leftmult{a}{B}{B}$ as defined above Theorem~\ref{thm:shimrecipad}.

\begin{lemma}\label{lem:defofa}
The matrix $A:=\matrixofShimura{x}^{-1}M^{-1}$ lies in $\GSp(\ZZhat)\times \GSp(\RR)^+$.
\end{lemma}
\begin{proof}
Note $\nu\circ \epsilon\circ N_{\reflextype} = N_{\reflexfield/\QQ}$, and the fact that~$\reflexfield$
has no real embeddings implies $N_{\reflexfield/\QQ}(\reflexfield\otimes \RR)\subset \RR_{\geq 0}$,
so $\matrixofShimura{x}_{\infty}\in\GSp(\RR)^+$.
We also have $M\in\GSp(\QQ)^+$ by Lemma~\ref{lem:mundanenew}, hence $A_\infty \in \GSp(\RR)^+$.
It now suffices
to prove for every prime number $p$ that~$A_p$ is in $\GSp(\ZZ_p)$.
For any number field~$L$ and~$x\in L_{\AA}^\units$,
write $x_p\in L \otimes\ZZ_p$ for the part corresponding
to primes over~$p$.

We have the following identity of~$\ZZ_p$-submodules of~$K\otimes\ZZ_p$ of rank $2g$:
$$(N_{\reflextype}(\mathfrak{a})^{-1}\mathfrak{b})\otimes \ZZ_p=N_{\reflextype}(x)_p^{-1} (\mathfrak{b}\otimes\ZZ_p)$$
(indeed, for $p\mid FN$, both sides are equal to
$\mathfrak{b}\otimes\ZZ_p$, while for $p\nmid F$,
the order is locally maximal and the identity follows from
$\ord_{\mathfrak{p}}(\mathfrak{a})=\ord_{\mathfrak{p}}(x_{\mathfrak{p}})$).
We have already chosen a basis $C = (c_1,\ldots,c_{2g})$ of the left hand side.
We take the $\ZZ_p$-basis
$C' = (N_{\reflextype}(x)_p^{-1} b_1, \ldots, N_{\reflextype}(x)_p^{-1} b_{2g})$ of the right hand side and notice
that~$A_p$ transforms one basis to the other
in the sense that $C' = N_{\reflextype}(x)_p^{-1} B = \epsilon(N_{\reflextype}(x))^{-1}_p B
= A_p C$.

In particular, we have $A_p\in\GL_{2g}(\ZZ_p)$. 
As the basis on the left is symplectic for~$N(\mathfrak{a})\xi$ and the one
on the right is symplectic for~$N(x)_p\xi$, we apply
Lemma~\ref{lem:premundane} and
find
$A_p\in\GSp(\QQ_p)$.
As we already had $A_p\in\GL_{2g}(\ZZ_p)$, we conclude
$A_p\in\GSp(\ZZ_p)$.
\end{proof}

\begin{proof}[Proof of Theorem~\ref{thm:general}]
The fact that $f(\tau)$ is in the ray class field modulo~$NF$
is Proposition~\ref{prop:conductor}.
We have $M\in\GSp(\QQ)^+$, $\nu(M) = N(\mathfrak{a})^{-1}$, and $\tau'=M\tau$
by Lemma~\ref{lem:mundanenew}.

It remains to prove that $M$ is invertible modulo $N$
and that $U = (M\ \mathrm{mod}\ N)^{-1}$ is in $\GSp(\ZZ/N\ZZ)$
and satisfies $f(\tau)^{[\mathfrak{a}]} = f^U(M\tau)$.

We have $\matrixofShimura{x}^{-1} = AM$ with
$A\in \GSp(\ZZhat)\times \GSp(\RR)^+$
by Lemma~\ref{lem:defofa}.
This and~\eqref{eq:trivialbutimportant}
imply that $M$ is invertible modulo $N$ and
that the inverse $U$ is $(A \mod N)$.
The ad\`elic reciprocity law (Theorem~\ref{thm:shimrecipad})
tells us $f(\tau)^{[\mathfrak{a}]} = f(\tau)^x = f^{AM}(\tau) = f^A(M\tau)$.
By Corollary~\ref{cor:zzmod}, we find that
$A$ acts on~$f$ as $U = (A \mod N)$ does.
Conclusion: $f(\tau)^{[\mathfrak{a}]} = f^{U}(M\tau)$.
\end{proof}

\begin{proof}[Proof of Theorem~\ref{thm:special}]
Theorem~\ref{thm:special} is a special case of
 Theorem~\ref{thm:general}
as follows.
Pick $C = \mu^{-1} B$ in Theorem~\ref{thm:general}, that is,
$c_i = \mu^{-1} b_i$ for $i=1,\ldots, 2g$,
so $M = \leftmult{1}{C}{B} = \leftmult{\mu^{-1}}{B}{B}$.
Then $M\tau=\tau$ since
multiplication by $\Phi(\mu)$ is a $\CC$-linear isomorphism that transforms
one symplectic basis into the other.
We also have $U = (M\ \mathrm{mod}~N)^{-1} = (\leftmult{\mu}{B}{B}\ \mathrm{mod}~N)$.
\end{proof}

\subsection{Determining the ideal group}
\label{sec:proofidealgroup}

Next, we prove Theorem~\ref{thm:idealgroup},
which states $\mathrm{Gal}(\cmext{N}/\reflexfield) = \IKr{NF}/\cmgp{N}$.
\begin{proof}[Proof of Theorem~\ref{thm:idealgroup}]
Note that Theorem~\ref{thm:special} and Lemma~\ref{lem:epsilonmodstar}
already imply that
$\cmgp{N}$ acts trivially on~$\cmext{N}$.
It remains to prove that if $\mathfrak{a}\in \IKr{NF}$ acts trivially
on $\cmext{N}$, then $\mathfrak{a}\in \cmgp{N}$.
Here without loss of generality the ideal $\mathfrak{a}$
is integral, that is, we have $\mathfrak{a}\subset\mathcal{O}_{K^r}$.

So let $\mathfrak{a}\in \IKr{NF}$ be an integral ideal
with $f(\tau)^{\mathfrak{a}}=f(\tau)$ for all~$f\in\FcalN$.
Let $U$ and $M$ be as in Theorem~\ref{thm:general},
so that for all $f\in\FcalN$, we get
$f(\tau) = f(\tau)^{\mathfrak{a}} = f^{U}(M\tau)$ with $U\in\GSp(\ZZ/N\ZZ)$
and $M\in\GSp(\QQ)^+$ such that~$U = (M\mod NF)^{-1}$.
We claim that without loss of generality, we have $U=1$,
$M\equiv 1\modstar N$ and $M\tau=\tau$.

Proof of the claim:
By taking $f=\zeta_N$, we find $\zeta_N^{\nu(U)}=\zeta_N$,
hence $U\in\Sp(\ZZ/N\ZZ)$.
Then lift $U$ to $\Sp(\ZZ)$, and use the lift to change the
chosen basis
$c_1,\ldots,c_g$ of Theorem~\ref{thm:special}.
We find that without loss of generality, we have $U=1$,
which implies $M\equiv 1\modstar N$.
We now have $f(\tau) = f(M\tau)$ for all $f\in\FcalN$,
and by \cite[(2.5.1)]{shimura-models-I},
this implies $\tau\in\Gamma_N M\tau$, i.e.,
$\tau=\gamma M\tau$ for some $\gamma\in\Gamma_N$.
We use $\gamma$ to change the basis
$c_1,\ldots,c_g$ again,
and conclude also $M\tau=\tau$. This proves the claim.

Let $\mathfrak{c}=\NPhirO(\mathfrak{a})^{-1}\mathfrak{b}$
with basis $C = MB$, leading by Lemma~\ref{lem:mundanenew}(b) to the period
matrix $M\tau$.
We have $M\tau=\tau$, hence there is a polarization-preserving isomorphism
$h:\CC^g/\Phi(\mathfrak{c})\rightarrow
\CC^g/\Phi(\mathfrak{b})=:A$
sending the $i$th element of $C$ to the $i$th element of~$B$.
The identity map on $\CC^g$ induces an isogeny
the other way around,
which scales the polarization by $N(\mathfrak{a})$.
Their composite is some $\mu \in \End(A)=\mathcal{O}$,
which satisfies $\mu^{-1}\mathfrak{b} = \mathfrak{c}$
and $\mu\overline{\mu} = N(\mathfrak{a})\in\QQ$.
This last identity shows that~$\mu$ is coprime to~$F$,
so if we look at the (invertible) coprime-to-$F$ part of
$\mu^{-1}\mathfrak{b} = \mathfrak{c}$,
then we find $\mu\mathcal{O} = \NPhirO(\mathfrak{a})$.

We have $\epsilon(\mu) = M$. 
Lemma~\ref{lem:epsilonmodstar}
therefore shows $\mu\equiv 1\modstar N\mathcal{O}$.
\end{proof}

We have now proven all results from Sections~\ref{ssec:results1}--\ref{ssec:statementbeforelast}.

  \section{Complex conjugation} \label{sec:conjugationproof}

  Next, we prove the results in Section~\ref{ssec:conjugationstatement}.
  \begin{proof}[Proof of Lemma~\ref{lem:complexconjugation}]
  Recall $\mathcal{M}_0 = \realreflex(f(\tau) : f \in \Fcal{1})$,
  and consider the extension $\cmext{1}=\mathcal{M}_0\reflexfield / \mathcal{M}_0$.
  Part (1) of Lemma~\ref{lem:complexconjugation} states
  that this extension has degree 2 if and only if there exist
  $\mathfrak{a}\in I(F)$ and $\mu\in K^\units$
  such that $\NPhirO(\mathfrak{a})\overline{\mathfrak{b}} = \mu \mathfrak{b}$
  and $\mu\overline{\mu} \in \QQ$.

We start by proving the `only if' part, so suppose that
  $\cmext{1}/\mathcal{M}_0$ has degree~$2$.
  The non-trivial automorphism $\gamma_0$ of this extension
  restricts to complex conjugation on~$\reflexfield$,
  so $\gamma:x\mapsto \overline{\gamma_0(x)}$ is an element of $\Gal(\cmext{1}/ \reflexfield)$.
 Suppose that $\tau$ is obtained from $(\mathfrak{b}, \xi)$, and
let $A$ be the corresponding principally polarized abelian variety.
  
  As $\gamma$ and complex conjugation are equal on~$\mathcal{M}_0$,
we get that $\gamma(A)$ and $\overline{A}$ are isomorphic.
  
By \cite[Proposition~3.5.5]{lang-cm}, the abelian variety
  $\overline{A}$ corresponds to $(\overline{\mathfrak{b}}, \xi)$.
  At the same time, the automorphism $\gamma$ corresponds via the Artin map to
  the class of an ideal $\mathfrak{a}$ of $\reflexfield$.
  The isomorphism between $\gamma(A)$ and $\overline{A}$ then gives
  an element $\mu\in K^\units$ such that we have
  $N_{\reflextype}(\mathfrak{a})\overline{\mathfrak{b}} = \mu \mathfrak{b}$
  and $N(\mathfrak{a}) = \mu\overline{\mu}$.
  This proves the `only if' of~(1).
  
  Conversely, if $\mathfrak{a}$ exists, by scaling $\mathfrak{a}$
(and scaling $\mu$ accordingly), we can assume $\mathfrak{a}$ to be coprime
to $NF$. Then take the corresponding
$\gamma\in\mathrm{Gal}(\cmext{1}/\reflexfield)$
and let $\gamma_0:x\rightarrow \overline{\gamma(x)}$,
which is in 
$\mathrm{Gal}(\cmext{1}/\mathcal{M}_0)$
and is non-trivial as it restricts to complex conjugation on~$\reflexfield$.
This prove the `if' part.

  For part~(2), in case $g=1$ and $\mathfrak{b}$ is coprime to $F\mathcal{O}$, we
  simply take
  $\mathfrak{a} = N_{\Phi}(\mathfrak{b}/\overline{\mathfrak{b}})$ and $\mu=1$
  as~$N_{\reflextype}$ is an isomorphism with inverse~$N_{\Phi}$.

  If $g=2$ and $\mathfrak{b}$ is coprime to $F\mathcal{O}$,
  take $\mathfrak{a} = N_{\Phi}(\mathfrak{b}\mathcal{O}_K)$ and $\mu=N(\mathfrak{b})$.
  We have $N_{\reflextype}N_{\Phi}(\mathfrak{b}\mathcal{O}_K)=
  N(\mathfrak{b}) \mathfrak{b}\smash{\overline{\mathfrak{b}}^{-1}}\mathcal{O}_K$
   (see
\cite[(3.3)]{shimura-onabelian} 
or
\cite[(3.2)]{kilicerstreng}), 
  which implies part~(2).

  Finally, if $\mathfrak{b}=\mathcal{O}$, then
  $\overline{\mathfrak{b}}=\overline{\mathcal{O}}=\mathcal{O}$,
  so $\mathfrak{a}=1$ and $\mu=1$ suffice.
  \end{proof}  
 
  \begin{proof}[Proof of Proposition~\ref{prop:complexconjugation}]
  Assume that $\cmext{1}/\mathcal{M}_0$
  is an extension of degree~$2$,
  so there exist $\mathfrak{a}$, $\mu$ and $\gamma_0$ as in the proof of Lemma~\ref{lem:complexconjugation},
  and without loss of generality we have 
  $\mathfrak{a}\in I(NF)$.
  
  Let $f\in\FcalN$ be such that $f(\tau)$ is a class invariant.
  Now $f(\tau)$ is in $\mathcal{M}_0$ if and only if 
  $\gamma_0(f(\tau)) = f(\tau)$ holds,
  that is, if and only if we have $\overline{f(\tau)^{[\mathfrak{a}]}} = f(\tau)$.
    
  The action of complex conjugation is easy to describe.
  For $h\in \FcalN$, note that $h^{i(-1\mod N)}$ is $h$
  with its Fourier coefficients replaced by their
  complex conjugates. Since complex conjugation is continuous
  on~$\CC$, we get
  \begin{equation}\label{eq:cconj}\overline{h(\tau)} =
   h^{i(-1\mod N)}(-\overline{\tau}).
  \end{equation}
  
  Let us look at the action of $[\mathfrak{a}]$ via the reciprocity
  law (Theorem~\ref{thm:general}).
  Let 
  $b_1,\ldots,b_{2g}$ be a symplectic basis of $\mathfrak{b}$ that gives rise to
  to $\tau$
  and consider the symplectic basis
  \[C = (\mu^{-1}\overline{b_1},\ldots,\mu^{-1}\overline{b_g},-\mu^{-1}\overline{b_{g+1}},\ldots,-\mu^{-1}\overline{b_{2g}})\]
  of $\mu^{-1}\overline{\mathfrak{b}} = \NPhirO(\mathfrak{a})^{-1}\mathfrak{b}$
  with respect
  to 
  $\mu\overline{\mu}\xi = N(\mathfrak{a})\xi$,
  which gives rise to the period matrix~$-\overline{\tau}$.
  By Theorem~\ref{thm:general}, we have
$\leftchange{C}{B}\in \GSp(\QQ)^+$,
$U := (\leftchange{C}{B}\ \mathrm{mod}~N)^{-1}\in\GSp(\ZZ/N\ZZ)$,
and $f^{[\mathfrak{a}]} = f^{U}(-\overline{\tau})$.
  
  The basis $C$ differs from $Q$ by multiplying the final $g$
  entries by $-1$,
  so we have $\leftchange{C}{Q} = i(-1)$.
  In particular, we have $\leftchange{Q}{B} = i(-1)\leftchange{C}{B}$,
  hence $V = U i(-1\bmod N)$.
  
  Applying \eqref{eq:cconj} to $h = f^U$, we conclude
  $\overline{f(\tau)^{[\mathfrak{a}]}} = f^{V}(\tau)$,
  so indeed we have $f(\tau)\in\mathcal{M}_0$
  if and only if $f^{V}(\tau) = f(\tau)$.
  
  Finally, suppose that we have $\alpha = f(\tau)\in \mathcal{M}_0$.
  Let $P\in \reflexfield[X]$ be the minimal polynomial of $\alpha$ over~$\reflexfield$.
  Then $\overline{P} = \gamma_0(P)$ is the minimal polynomial of $\gamma_0(\alpha)$ over~$\reflexfield$.
  In the case $\alpha\in\mathcal{M}_0$, we have $\gamma_0(\alpha) = \alpha$, hence $\overline{P}=P$,
  so $P$ has coefficients in~$\realreflex$.
  \end{proof}


\section{Theta constants} \label{sec:theta}
 \newcommand{\prm}{}
 \newcommand{\prmt}{\transpose}
 
 \newcommand{\tinv}{t_{\mathrm{inv}}}
 
For $c_1,c_2\in\QQ^g$, the \emph{theta constant} with characteristic $c_1,c_2$ is
the map $\theta[c_1,c_2] : \HH_g \rightarrow \CC$ given by
      \begin{equation}\label{eq:deftheta}\theta
[ c_1,c_2](\tau)= \sum_{v\in \ZZ^g} \text{exp}(\pi i (v+c_1)\transpose \tau (v+c_1)+2\pi i (v+c_1)\transpose
c_2).
\end{equation}
We often restrict to theta constants with $c_i\in [0,1)^g$,
because we have
\begin{align}\label{eq:thetamodz}\theta[c_1+n_1, c_2+n_2] &= \exp(2\pi i c_1\transpose n_2)\theta[c_1, c_2]& \mbox{for}\quad n_1,n_2\in\ZZ^g.
\end{align}

Theta constants have a very explicit action, as the following
result shows.
The result itself is not surprising,
but the author is unaware of an equally explicit version
in the literature:
directly working for $\GSp$
instead of only $\Sp$
and working with arbitrary coefficient-wise lifts
instead of having to lift to $\Sp(\ZZ)$.

\begin{proposition}\label{prop:theta}
Given $D\in 2\ZZ$ and $c_1,c_2,c_1',c_2'\in D^{-1}\ZZ^{g}$, we have
$$\frac{\theta[c_1,c_2]}{\theta[c_1',c_2']}\in\mathcal{F}_{2D^2}.$$
Moreover, the action of $A\in \GSp(\ZZ/2D^2\ZZ)$ is as follows.
Take lifts
$$B=\tbt{a & b \\ c & d}\in\matrixring{2g}{\ZZ}
\quad \mbox{and}\quad \tinv\in\ZZ$$
of $A$ and $\nu(A)^{-1}$.
Define 
$$\left(\genfrac{}{}{0pt}{0}{d_1^{\prm}}{d_2^{\prm}}\right) =
B\transpose \left(\genfrac{}{}{0pt}{0}{ c_1^{\prm}-\frac{1}{2}\ \tinv\ \mathrm{diag}(cd\transpose)}{
c_2^{\prm} - \frac{1}{2}
                   \ \tinv\ \mathrm{diag}(ab\transpose)}\right)\quad\mbox{and}
                   $$
$$r^{\prm}=\frac{1}{2}(\tinv (dd_1^{\prm}-cd_2^{\prm})\transpose (-bd_1^{\prm}+ad_2^{\prm}+\mathrm{diag}(ab\transpose)) - d_1^{\prmt} d_2^{\prm}),$$
and define $d_1'$, $d_2'$, $r'$ analogously.
Then we have
\begin{equation}
\label{eq:theta}
\left(\frac{\theta[c_1,c_2]}{\theta[c_1',c_2']}\right)^A =
\exp(2\pi i (r-r'))
\frac{\theta[d_1,d_2]}{\theta[d_1',d_2']}.
\end{equation}
\end{proposition}

\begin{remark}
It is known that the field generated by all quotients
as in Proposition~\ref{prop:theta} (for all $D$)
equals the field~$\Fcalall$ (see for example \cite[27.15]{shimura}).
\end{remark}

To prove Proposition~\ref{prop:theta} we use the following 
lemma
giving the action of $\Sp(\ZZ)$.
\begin{lemma}\label{lem:theta}
Given $B\in\Sp(\ZZ)$, there is a holomorphic $\rho =\rho_B: \HH_g\rightarrow \CC^{\units}$
such that
for all $c_1,c_2\in\QQ^g$, we have
$$\theta[c_1,c_2](B\tau) = \rho(\tau) \exp(2\pi i r)\theta[d_1,d_2](\tau),$$
where $d_1$, $d_2$, $r$ are as in the formulas of Proposition~\ref{prop:theta}
with $t=1$.
\end{lemma}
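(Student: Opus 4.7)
The plan is to prove the formula on a generating set of~$\Sp(\ZZ)$, and then extend it to the full group by showing that both sides transform compatibly under composition of matrices. This is the classical route taken by Igusa in the theory of theta constants; the lemma is essentially a restatement of Igusa's transformation law with all the phases tracked explicitly.

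For concreteness I would use the standard Siegel generators: the symmetric translations $T_s=\smat{I & s \\ 0 & I}$ with $s=s\transpose\in\Mat_g(\ZZ)$, the involution $J=\smat{0 & -I \\ I & 0}$, and the ``diagonal'' matrices $D_u=\smat{u & 0 \\ 0 & u^{-\transpose}}$ with $u\in\GL_g(\ZZ)$. For $T_s$ one has $a=d=I$, $b=s$, $c=0$, so the recipe reduces to $d_1=c_1$ and $d_2=s\transpose c_1+c_2-\tfrac12\mathrm{diag}(s)$; substituting $\tau\mapsto\tau+s$ into the defining series and using the elementary identity $v\,s\,v\transpose\equiv v\cdot\mathrm{diag}(s)\transpose\pmod 2$ for $v\in\ZZ^g$ and symmetric integer~$s$, the new quadratic term in the exponent absorbs into the linear term and produces the claim with $\rho_{T_s}=1$. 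For $D_u$ both corrections $\tfrac12\mathrm{diag}(cd\transpose)$ and $\tfrac12\mathrm{diag}(ab\transpose)$ vanish; direct substitution $\tau\mapsto u\tau u\transpose$ together with the bijective change of summation variable $v\mapsto u^{-1}v$ gives $d_1=u\transpose c_1$, $d_2=u^{-1}c_2$ and $\rho_{D_u}=1$.

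For $A=J$ the recipe reduces to $d_1=c_2$, $d_2=-c_1$ and $r=c_1\transpose c_2$. This is the classical Jacobi/Siegel functional equation for theta constants with rational characteristics, which is proved by applying multidimensional Poisson summation to the Gaussian generating function and identifying the resulting sum as another theta constant. The proportionality factor $\rho_J(\tau)$ turns out to be $\sqrt{\det(-i\tau)}$ for an appropriate branch of the square root; crucially for our purposes it depends only on~$\tau$, not on~$(c_1,c_2)$.

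The main obstacle is verifying that the recipe extends from generators to all of~$\Sp(\ZZ)$, which comes down to checking the cocycle identity
$$(d_1,d_2)(A_1 A_2,c_1,c_2)\;=\;(d_1,d_2)\bigl(A_1,(d_1,d_2)(A_2,c_1,c_2)\bigr)$$
together with additivity of the phases~$r$ modulo~$\ZZ$. The subtlety is that the half-integer corrections $\tfrac12\mathrm{diag}(cd\transpose)$ and $\tfrac12\mathrm{diag}(ab\transpose)$ do \emph{not} compose linearly under multiplication of symplectic matrices; the obstruction is exactly compensated by the phase~$r$, and this is the origin of the well-known fact that theta constants furnish only a projective representation of~$\Sp(\ZZ)$. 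Once the cocycle identity has been checked as a formal algebraic computation in the coordinates of the~$A_i$ and the~$c_j$ (using the symplectic relations $a d\transpose-b c\transpose=I$, $a b\transpose=b a\transpose$, $c d\transpose=d c\transpose$ freely), the factor~$\rho_A$ for general~$A$ is defined by writing $A$ as a product of generators and multiplying the corresponding~$\rho$-factors; the well-definedness of~$\rho_A$ follows from the cocycle identity together with the fact that each defining relation of~$\Sp(\ZZ)$ acts trivially on the characteristics, so that the ambiguity affects only an $(c_1,c_2)$-independent overall scalar which may be absorbed into~$\rho_A$.
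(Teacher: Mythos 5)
Your proposal is correct in outline, but it takes a genuinely different route from the paper. The paper's proof is essentially a citation-and-translation: it invokes Formula~8.6.1 and Lemma~8.4.1(b) of Birkenhake--Lange (a packaged statement of the theta transformation law with characteristics), and then does a short matrix manipulation to show that the characteristic transformation given there — which acts by $\Omega A\Omega^{-1}$ together with the $\tfrac12\mathrm{diag}$ corrections — is exactly the $A\transpose$-based recipe of Theorem~\ref{thm:theta}, using the identity $\Omega A\Omega^{-1} = (A\transpose)^{-1}$ for $A\in\Sp(\ZZ)$. Your approach re-proves that reference from scratch: verify the formula on the Siegel generators $T_s$, $D_u$, $J$ (the first two by reindexing the defining series, the last by Poisson summation), then extend to all of $\Sp(\ZZ)$ by a cocycle computation. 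This is the route Igusa originally took; it is more self-contained but substantially more work, and the cocycle step that you only outline is a genuinely tedious algebraic verification. For the purpose of establishing the lemma as stated, the paper's route is much more economical.

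Two remarks on the sketch itself. First, your cocycle identity has the composition in the wrong order: since the linear part of the characteristic map is $c\mapsto A\transpose c$, composing two maps reverses the group elements, so it should read $d(A_1A_2,c)=d\bigl(A_2,\,d(A_1,c)\bigr)$ rather than $d\bigl(A_1,\,d(A_2,c)\bigr)$, reflecting $(A_1A_2)\transpose=A_2\transpose A_1\transpose$; equivalently, $\theta[c](A\tau)$ defines a \emph{right} action on characteristics, and writing it as a left action will make the identity fail when you try to check it. Second, the sign $-\tfrac12\mathrm{diag}(s)$ you quote for $T_s$ (matching the paper) differs from what a bare substitution plus the parity identity $vsv\transpose\equiv v\cdot\mathrm{diag}(s)\ (\mathrm{mod}\ 2)$ naturally produces, which is $+\tfrac12\mathrm{diag}(s)$; both are legitimate because they differ by the integer vector $\mathrm{diag}(s)$, but once a convention is fixed it must be carried through consistently into $r$ for the cocycle identity to close on the nose rather than merely up to $\ZZ^g\times\ZZ^g\times\ZZ$.
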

\begin{proof}
This follows with some algebraic manipulation when
substituting our $d$ for the $c$ in Formula~8.6.1
of~\cite{birkenhake-lange}
(see \cite[Lemma~8.4.1(b)]{birkenhake-lange} 
for the definition of $M[d]$).
 \end{proof}
\begin{remark}
	The interested reader could
	see \cite[Exercise 8.11(9)]{birkenhake-lange}
	for more information about~$\rho_B$.
\end{remark}

\begin{proof}[{Proof of Proposition~\ref{prop:theta}}]
	Let $N=2D^2$.
	We start by showing that the right hand side of \eqref{eq:theta} is independent
	of the choices of lifts.
	
	Note that a change of lift changes $B\transpose$
	at most by adding elements of $2D^2\ZZ$ to the entries.
	Similarly, it changes $c_i - \frac{1}{2}\ \tinv\ \mathrm{diag}(\cdots)\in D^{-1}\ZZ^g$
	at most by adding elements of $D^2\ZZ^g$.
	In particular, it changes $d_1$, $d_2$, $d_1'$, and $d_2'$
	at most by adding elements of $2D\ZZ^g$.
	In turn, this means that $r$ changes at most by adding an element of~$\ZZ$.
	Neither change has effect on the right hand side of \eqref{eq:theta}
	by~\eqref{eq:thetamodz}.
	
	Now that we know that \eqref{eq:theta} is independent
	of the chosen lifts, we prove it
	for $A\in\Sp(\ZZ/N\ZZ)$
	by taking a lift in $B\in \Sp(\ZZ)$,
	taking $\tinv = 1$,
	and applying Lemma~\ref{lem:theta}
	to the numerator and denominator,
	where the factors $\rho(\tau)$ cancel.
	
	Next, we show that $f = \theta[c_1,c_2]/\theta[c_1',c_2']$
	is indeed in $\FcalN$.
	First multiply the numerator and denominator
of~$f$ by $\theta[0,0]^7$.
    Then we use Lemma~\ref{lem:theta}
    with $\rho_B(\tau)^{8} = (\det c\tau+d)^4$.
    We have already done all the computations
    required for checking that these modular forms
    are invariant under~$\Gamma_N$.
    As the Fourier coefficients are in $\QQ(\zeta_N)$
    by the definition~\ref{eq:deftheta},
    we find $f\in\FcalN$.

\newcommand{\ch}[1]{#1^0}

Finally, any element $A\in \GSp(\ZZ/N\ZZ)$ can be written as~$A=\ch{A}i(t)$.
with $t = \nu(A)\in\ZZ/N\ZZ$, and $\ch{A}\in\Sp(\ZZ/N\ZZ)$.
Choose lifts $B\in\ZZ^{2g\times 2g}$ of $\ch{A}$ and $\widetilde{t}\in\ZZ$ of~$t$.
Starting from \eqref{eq:theta} for~$\ch{A}$, we compare what happens
when we either multiply $\ch{A}$
by~$i(t)$ from the right, or act on the right hand side
of \eqref{eq:theta} by~$i(t)$.

The latter replaces $\zeta_N$ by $\zeta_N^{\widetilde{t}}$,
which is equivalent (by the definition \eqref{eq:deftheta})
to changing $r$ into $\widetilde{t}r$ and $(d_1,d_2)$ into $(d_1,\widetilde{t}d_2)$.

Writing $B = (a,b;c,d)$,
we get
$A = ((a,b\widetilde{t};c,d\widetilde{t})\bmod{N})$.
It is straightforward to check
that multiplying
$b$ and $d$
by $\widetilde{t}$ and changing $\ch{t}_{\mathrm{inv}} = 1$
into $t_{\mathrm{inv}}$
changes $(d_1,d_2)$ into $(d_1, \widetilde{t}d_2)$ modulo $D^2\ZZ^{2g}$.
In turn, this changes $r$ into $\widetilde{t}r$ modulo $\ZZ$.
By \eqref{eq:thetamodz} this gives the same result
as just changing $r$ into $\widetilde{t}r$ and $(d_1,d_2)$ into $(d_1,\widetilde{t}d_2)$.
\end{proof}

Given a rational function $f\in\FcalN$ that is expressed in terms of theta constants with
characteristics in $D^{-1} \ZZ^{2g}$ with $N\mid 2D^2$,
we can now evaluate the action of $A\in \Sp(\ZZ/N\ZZ)$ on~$f$.
We do not need to lift $A$ to~$\Sp(\ZZ)$,
only to $\GSp(\ZZ/2D^2\ZZ)$, which
is a relatively simple matter of linear algebra over~$\mathbf{F}_p$
for primes $p\mid 2D$.
And in fact, we choose even to avoid that
by applying the reciprocity theorem (Theorem~\ref{thm:general})
directly with $2D^2$ in place of~$N$
(and using $f\in \Fcal{N}\subset \Fcal{2D^2}$).

If $f$ is a quotient of homogeneous polynomials of equal degree in the theta constants,
then we can simply apply the formulas in Proposition~\ref{prop:theta}
directly to the individual theta constants and do not have to write $f$
as a rational function of quotients of the form $\theta[c_1,c_2]/\theta[c_1',c_2']$.
For example, note that we have
\begin{equation}\label{eq:f} f = 
\frac
{\theta[\frac{1}{2},0,0,\frac{1}{2}]}
{\theta[\frac{1}{2},\frac{1}{2},0,0] + \theta[0,0,0,0]}
\quad =\quad \frac
{\frac{\theta[\frac{1}{2},0,0,\frac{1}{2}]}{\theta[c_1',c_2']}}
{\frac{\theta[\frac{1}{2},\frac{1}{2},0,0]}{\theta[c_1',c_2']} + \frac{\theta[0,0,0,0]}{\theta[c_1',c_2']}},
\end{equation}
and the copies of $\exp(-2\pi i r')\theta[d_1',d_2']^{-1}$ in the numerator and denominator cancel in the end anyway.

	We implemented the formulas of Proposition~\ref{prop:theta}
	in \cite{cmcode}
	as \verb!f^A! where $f$ as in \eqref{eq:f} can be constructed
	using 
	\[ \verb!f = ThetaModForm([1/2,0,0,1/2]) / (ThetaModForm([1/2,1/2,0,0]) +!\]
	\[\verb!ThetaModForm([0,0,0,0]))!.\]

  \section{Finding class invariants and minimal polynomials}\label{sec:examples}
  
  In this section, we demonstrate
  how to use the main results
  for finding class invariants.
  We 
  give additional results
  and algorithms as we need them.
  
Given an order~$\mathcal{O}$ in a CM field~$K$
of degree $2g$
  and a primitive CM type~$\Phi$ of~$K$,
  a \emph{class invariant} is a value $f(\tau)$
  with $f\in\Fcalall$, $\tau$ a primitive CM point
  with CM by $\mathcal{O}$ of type $\Phi$, and $f(\tau)\in \cmext{1}$.
For example, if $K$ is quadratic and $\mathcal{O} = \ZZ+\tau\ZZ$,
then $j(\tau)$ is a class invariant, and its minimal polynomial over~$K$
is called the~\emph{Hilbert class polynomial}~$H_{\mathcal{O}}\in\ZZ[X]$.
Weber \cite{weber3} gave class invariants of imaginary quadratic orders
with minimal polynomial that have much smaller coefficients
than~$H_{\mathcal{O}}$
and from which $j(\tau)$ can be recovered.
For CM fields of degree~$2g$, we compare
the height of our
our class invariants with the height of values of
known generators
of~$\Fcal{1}$,
such as $j$ for $g=1$ and
absolute Igusa invariants~\cite{igusa}
for $g=2$.
 
Given~$f\in\FcalN$, we check the inclusion $\reflexfield(f(\tau))\subset\cmext{1}$
(equivalently~$f(\tau)\in\cmext{1}$) using Theorem~\ref{thm:special}.
If~$f$ is sufficiently general, then the inclusion of fields
$\reflexfield(f(\tau))\subset\cmext{1}$ is
an equality,
which can be verified numerically using Theorem~\ref{thm:general}.
The latter theorem also allows us to
numerically determine the minimal polynomial of $f(\tau)$ over~$\reflexfield$.

\subsection{Finding a class invariant}\label{sec:findingclassinv}\label{ssec:detailedexample}

In this example, consider
quotients $f$
of products of theta constants with~$c_1,c_2\in\{0,\frac{1}{2}\}^2$,
that is, $g=2$, $D=2$,~$N=8$.
We also include this example
at the beginning of the file \verb!article.sage! at~\cite{cmcode},
so it could be followed step by step on a computer.
The theta constants
for which~$4c_1^{\phantom{t}} c_2\transpose$ is odd are identically zero,
and we are left with~$10$ so-called 
\emph{even theta constants}, which happen to have Fourier coefficients
in~$\ZZ$.
Following~\cite{dupont}, we use the notation
$\theta[(a,b), (c,d)] =: \theta_{16b + 8 a +4 d + 2c}$
for $a,b,c,d\in\{0,\frac{1}{2}\}$, so 
the even theta constants are $\theta_k$
for $k\in\{0,1,2,3,4,6, 8, 9, 12, 15\}$.

We take the quartic CM field
$K= \QQ(\alpha) = \QQ[X]/(X^4+27X^2+52)$
from \cite[Example~III.3.2]{phdthesis}.
Its real quadratic subfield is $K_0=\QQ(\sqrt{521})$.
Take the CM type~$\Phi = \{\phi : K\rightarrow \CC \mid\phi(\alpha)\in i\RR_{>0}\}$
and let $w=\sqrt{13}\in\RR_{>0}$.
The real quadratic subfield of the reflex field~$\reflexfield$
is~$\QQ(w)$.

We start by finding 
a period matrix $\tau = \tau(\Phi, \mathfrak{b}, \xi,B)$
as in Section~\ref{sec:algperiodmatrices}.
In our case, this yields $\mathfrak{b}=\mathcal{O}$, 
$\xi=2(22411531\alpha^3 + 46779315\alpha)^{-1}$,
and a symplectic basis
\begin{align*}
 B = &\textstyle{\frac{1}{4}}(653 \alpha^{3} + 3414 \alpha^{2} + 1363 \alpha + 7126,
& 401 \alpha^{3} + 2360 \alpha^{2} + 837 \alpha + 4926,\\
&\quad -653 \alpha^{3} + 1306 \alpha^{2} - 1363 \alpha + 2726,
& 2108 \alpha^{2} + 4400).
 \end{align*}

Next, we compute generators of the image of the map
$$\wascalledg : \frac{\IKr{N}\cap\cmgp{1}}{\cmgp{N}}
\longrightarrow  \GSp(\ZZ/N\ZZ)/\bracketsO$$ from
\eqref{eq:wascalledg} in
Section~\ref{sec:statementspecific} using
the command \verb!reciprocity_map_image(tau, 8)! of \cite{cmcode},
that is, using Algorithm~\ref{alg:imageofr}.
This yields a list $R$ of $6$ matrices in $\GSp(\ZZ/8\ZZ)$.

A function $f\in\Fcal{8}$ yields a class invariant if it is fixed by all elements of~$R$.
Let us look at the action on quotients of theta constants
of Proposition~\ref{prop:theta} more closely, starting with $8$th powers
so that the factor $\exp(2\pi i (r-r'))$ vanishes.
This action can be viewed as an action on the numerator
and denominator separately.
So this is an action of $\GSp(\ZZ/8\ZZ)$ on the set of 8th powers of the ten even theta constants.
Under the action of the subgroup generated by~$R$, we compute that
this set is partitioned into~4 orbits:
$\{\theta_0^8, \theta_{1}^8, \theta_6^8\}$,
$\{\theta_2^8, \theta_4^8, \theta_3^8\}$, $\{\theta_8^8,\theta_9^8,\theta_{15}^8\}$,
$\{\theta_{12}^8\}$.

Let us restrict our search for class invariants
to
those functions~$f$ that are products
of powers of the theta constants.
To ensure that the image of~$\wascalledg$ fixes
$f^8$ up to units,
we use whole orbits,
that is, write $$f=
c
(\theta_0\theta_6\theta_1)^j
(\theta_2\theta_3\theta_4)^k
(\theta_8\theta_9\theta_{15})^l\theta_{12}^{m}$$ with 
$c\in\QQ^{\mathrm{ab}}$
and integers $j$, $k$, $l$, $m$ that satisfy $3j+3k+3l+m=0$.

There are various values of $(j,k,l,m)$ that one could try, but we prefer
the minimal polynomial of $f(\tau)$ over $\reflexfield$ to have coefficients in~$\realreflex$,
so we also look at the action
of $V$ from 
Proposition~\ref{prop:complexconjugation}.
It turns out that this action swaps the first two orbits,
so we take~$j=k$.
In fact, we like to use small products of theta constants,
so we leave out these six theta constants, that is, we take $j=k=0$. We then get $3l=-m$, so
with $n=-l$ we get
\[f = c f_0^n\quad\mbox{where}\quad f_0 = \frac{\theta_{12}^3}{\theta_8\theta_9\theta_{15}}.\]
Note that if $8$ divides $n$ and $c\in\QQ$,
then we have $f(\tau)\in \cmext{1}$, but to let $f(\tau)$ have small height, we want
to try smaller values of~$n$.

Explicitly computing the action of $R$ and $V$
on~$f_0$ and $\zeta_8$,
and trying out every $c\in \mu_8$
for $n=1,2,\ldots$,
we find that
$n=2$, $c=\zeta_8^2$ gives a function that
is invariant under $R$ and $V$,
so that we have $f(\tau)\in\mathcal{M}_0$.

The steps above illustrate a general algorithm,
which is also what we followed when
creating the examples mentioned in Section~\ref{ssec:moreexamples} below.

It is however sometimes too restrictive to only consider roots
of unity~$c$, as demonstrated by Sot\'akov\'a~\cite{sotakovamsc}
(even in the case $g=1$).
In Section~\ref{ssec:quotientalgorithm}, we give the higher-dimensional
version of Sot\'akov\'a's ideas for finding the optimal $n$ and~$c$.
For this particular function $f$ it still yields $n=2$ as the smallest valid exponent.

\subsection{Computing the minimal polynomial}
\label{sec:computingminpoly}

So now we have our class invariant $f(\tau)\in\cmext{1}$ and we
would like to compute its minimal polynomial over~$\reflexfield$.
We have $\Gal(\cmext{1}/\reflexfield)
= \IKr{1}/\cmgp{1}$ (Theorem~\ref{thm:idealgroup}).
In general, this group could be computed using
the methods of \cite[Section~4.2]{enge-thome}.
In this particular case, the class number of $K^r$
is odd and the class group of its real quadratic subfield is trivial,
hence (see \cite[Example~I.10.4]{phdthesis})
the Galois group is simply the class group of~$\reflexfield$.

For each of the~$7$ ideal classes of~$\reflexfield$,
we compute~$U$ and~$\tau'$ as in Theorem~\ref{thm:general}.
We make sure that the basis $C$ is such that
$\tau'$ is \emph{reduced} for the action of~$\Sp(\ZZ)$
(see the end of Section~\ref{sec:basis})
so that the theta constants
can be numerically evaluated most efficiently.

Then we compute $f^U$ as in Section~\ref{sec:theta}
and evaluate it numerically at~$\tau'$
to get a root of the minimal polynomial of $f(\tau)$ over~$\reflexfield$.
This yields an approximation of
$$H_f = \prod_{i=1}^{7} (X-f^{U_i}(\tau_i'))\in \realreflex[X],$$
and we recognize its coefficients as elements of $\realreflex\subset\CC$
with the LLL-algorithm as in~\cite[Section~7]{lattices}.
The entire calculation is in the file 
\verb!article.sage! of~\cite{cmcode}.

We find that numerically with high precision, we have
{\footnotesize
\begin{align*}
3^8 101^2 H_f& = 66928761 X^7 + (21911488848 w - 76603728240) X^6 \\
&\quad + (-203318356742784 w + 733099844294784) X^5 \\
&\quad + (-280722122877358080 w + 1012158088965439488) X^4 \\
&\quad + (-2349120383562514432 w + 8469874588158623744) X^3 \\
&\quad + (-78591203121748770816 w + 283364613421131104256) X^2 \\
&\quad + (250917334141632512 w - 904696010264018944) X \\
&\quad - 364471595827200 w + 1312782658043904,
\end{align*}}

\noindent which is significantly smaller than the smallest minimal polynomial
obtained when using
Igusa invariants, even with the small Igusa invariants
from~\cite{runtime}:
\footnotesize
\begin{align*}
& 101^2 H_1 =  10201 X^7\\
&\qquad +  (155205162116358647755w + 559600170220938887110)X^6\\
&\qquad + (152407687697460195175920750535594152550 w \\
&\qquad\quad +    549513732768094956258970636118192859400) X^5\\
&\qquad + \tfrac{1}{2}(2201909580030523730272623848434538048317834513875w\\
&\qquad\quad + 7939097894735431844153019089320973153011210882125)X^4\\
&\qquad + (1047175262927393182849164587480891367594710449395570625 w\\
&\qquad\quad + 3775644104882200832865729346429752069380200097845736875)X^3\\
&\qquad +
\tfrac{1}{2}(90739291480049485513675299110604131111640471324738060%
7234375w\\
&\qquad \quad +
327165168130591119268893142372375309476346120037916993%
8284375)X^2\\
&\qquad + 
    (1501416604965651986004588022297124411339065052590506998%
7454062500w\\
    &\qquad\quad + 
                541343455503671907856059844455869398930835318514053659%
78411062500)X\\
&\qquad + \tfrac{1}{2}(32085417029115132212877701052175189051312077050549053%
7777676328984375w\\
    &\qquad\quad + 
115685616293120067038709321144324285012570966768326545%
9917987279296875).
\end{align*}
\normalsize

As the first polynomial
is so much smaller, we needed a much lower precision
to reconstruct it from a numerical approximation.
As our invariant $f$ is built up from the same
theta constants as the absolute Igusa invariants
(see \cite[Section~8]{runtime}),
it takes the same time to evaluate it
to any given precision, so saving precision in
this way means saving time.

\subsection{More examples}\label{sec:engethomeexample}\label{ssec:moreexamples}

We searched for class invariants with $D=g=2$
for a few more fields.
For each of the fields we tried, the results were similar to
Section~\ref{sec:findingclassinv}:
an easily found product of powers of the ten even
theta constants yielded a class invariant, which 
reduced the precision required for
finding the class polynomials.
We made such examples available online in \verb!article.sage! at~\cite{cmcode}.

We mention one of them in particular.
Andreas Enge and Emmanuel Thom\'e,
when demonstrating their implementation
of a method for computing class polynomials~\cite{enge-thome},
presented at the GeoCrypt 2011 conference
a computation of the Igusa class polynomials of
the maximal order~$\mathcal{O}_K$ of the
field $K=\QQ[X]/(X^4 + 310 X^2 + 17644)$
of class number~$3948$.

Following the steps of Section~\ref{sec:findingclassinv},
we found that the functions
$$t=\frac{\theta_0\theta_8}{\theta_4\theta_{12}}\in \mathcal{F}_8,\quad
u=\left(\frac{\theta_2\theta_8}{\theta_6\theta_{12}}\right)^2\in\mathcal{F}_2,\quad  v = \left(\frac{\theta_0\theta_2}{\theta_4\theta_6}\right)^2\in\mathcal{F}_2$$
are
class invariants for a certain $\tau$
with CM by~$\mathcal{O}_K$.

These invariants have the additional advantage that
$(t^2, u, v)$ are \emph{Rosenhain invariants},
meaning that
the abelian variety corresponding to $\tau$
is the Jacobian of
$$C: y^2 = x(x-1)(x-t(\tau)^2)(x-u(\tau))(x-v(\tau)).$$
In particular, they are especially useful
for constructing curves as
in Section~\ref{ssec:cmmethod}.

We believe for two reasons that these class invariants have much smaller height
than the Igusa invariants. 
First, this is what happened in our other examples with quotients
of small products of theta functions,
and second it is claimed in~\cite{cdly} that Rosenhain invariants typically have much smaller height
than Igusa invariants.
As a result we expect that
these invariants would have significantly sped up the computation
for the example of Enge and Thomé.

\subsection{More general constants}\label{ssec:quotientalgorithm}

  In Section~\ref{ssec:detailedexample} we described a general procedure
  for finding class invariants of the form $c \prod_{i} \theta[c_i]^{e_i}(\tau)$
  with roots of unity~$c$.
  Sot\'akov\'a in her MSc thesis~\cite{sotakovamsc}
  showed how to do the same with arbitrary elements
  $c\in \QQ(\zeta_N)^\units$ that are not necessarily roots of
  unity.
  Her ideas come down to the use of
  the inflation-restriction sequence from group cohomology
  and Hilbert's theorem 90,
  and amount to the following result.
  
  \begin{proposition}[{cf.~\cite[Section~5.4.3]{sotakovamsc}}]
  	Given 
  	a subgroup $G\subset \GSp(\ZZ/N\ZZ)$,
  	let $\FcalN^G$ be the fixed subfield.

  	Let $H
  	\subset G$
  	and $C\subset (\ZZ/N\ZZ)^\units$
  	be the kernel
  	and image
  	of $\nu : G\rightarrow (\ZZ/N\ZZ)^\units$.
	Then we have
	\begin{equation}\label{eq:fcalG}
	\{ f \in \FcalN^\units : \exists_{c\in \QQ(\zeta_N)^\units}\ cf \in \FcalN^G\}
	= \left\{f \in \FcalN^\units :\begin{array}{l}\forall_{A\in G}\ \ f^A/f\in\QQ(\zeta_N)^\times\\
  		\text{and}\ \ \forall_{A\in H}\ f^A = f\end{array}\right\}.
  	\end{equation}
  	
    Moreover, for every element $f$ of the right hand side
    we can find $c$ with $cf\in \FcalN^G$ as follows:
    \begin{enumerate}
  				\item let \begin{align*}\phi : \quad C  = \nu(G) &\rightarrow \QQ(\zeta_N)^\units\\
  					\nu(A)&\mapsto f^A/f,
  					\end{align*}
  				\item take any $y\in\QQ(\zeta_N)$ such that
$$c := \sum_{k \in C}  \phi(k) y^{\sigma(k)}\not=0,$$
where $\zeta_N^{\sigma(k)} = \zeta_N^k$.
   \end{enumerate}
  \end{proposition}
\begin{remark}Two typical groups $G$ we take are as follows.
	Let $\tau$ be a CM period matrix and let $G'$ be the preimage
    $\GSp(\ZZ/N\ZZ)$ of the image of $\wascalledg$
    as in~\eqref{eq:wascalledg}.

We can take $G = G'$,
and then $f\in\FcalN^G$ implies
$f(\tau)\in \cmext{1}$
by Theorem~\ref{thm:special}.

Alternatively, we take $G = \langle G', V\rangle$ to be the subgroup of
$\GSp(\ZZ/N\ZZ)$ generated by
$G'$ and the complex conjugation matrix $V$
of Proposition~\ref{prop:complexconjugation}.
In that case $f\in\FcalN^G$ implies
$f(\tau)\in \mathcal{M}_0$,
so that the minimal polynomial of $f(\tau)$
over $K^r$ has coefficients in $K_0^r$.
\end{remark}

\begin{remark}\label{rem:algorithmconstant}
In practical computations, we consider
a free abelian subgroup $F\subset \FcalN^\units/\QQ(\zeta_N)^\units$
generated by finitely many theta constants and stable
under the action of $\GSp(\ZZ/N\ZZ)$.
Then the conditions on the right hand side of \eqref{eq:fcalG}
come down to linear equations in the $\ZZ$-module $F$,
and hence finding the intersection of $F$ with the right
hand side of \eqref{eq:fcalG}
comes down to linear algebra over~$\ZZ$.
\end{remark}

\begin{proof}
	The left hand side of \eqref{eq:fcalG}
	is contained in the right
	because its elements $f$ satisfy $f^A = (cf)^A/c^A = cf / c^A = (c/c^A) f$
	for all $A\in G$ with $c^A=c$ if $A\in H$.
    
    For the reverse inclusion, it suffices to show that the procedure
    of steps (1) and~(2)
    is correct.
    Note that the map $\phi$ is well-defined precisely
    when $f$ is in the right hand side of~\eqref{eq:fcalG}.
    This map $\phi$ is a $1$-cocycle for the group $C\subset (\ZZ/N\ZZ)^\units
    =\Gal(\QQ(\zeta_N)/\QQ)$ and the $C$-module $\QQ(\zeta_N)^\units$,
    that is, we have $\phi(k\ell) = \phi(k)^{\sigma(\ell)} \phi(\ell)$.
    Such a cocycle is a coboundary by Hilbert's theorem 90
    ($H^1(\Gal(E/F), E^\units)=0$ with $E = \QQ(\zeta_N)$
    and $F = E^C$).
    In fact, the proof of Hilbert's theorem 90 comes down to $c$
    being non-zero for some~$y$
    together with the direct verification of
    the identity $c^{\sigma(\ell)} = \phi(\ell)^{-1} c$.
    This gives $(cf)^A = \phi(\nu(A))^{-1}c\phi(\nu(A)) f = cf$,
    hence $cf\in\FcalN^G$.
\end{proof}
As examples where this procedure finds small class
invariants where $c$ is not a root of unity,
see \cite[Sections 6.2 and~6.3]{sotakovamsc}.

  \section{Applications} \label{sec:applications}

\subsection{Class fields}

Hilbert class fields of number fields can be computed using Kummer
theory \cite{cohen2, magma}, but that requires extending
the base field with auxiliary roots of unity, which make such computations
too costly for larger examples.
Complex multiplication yields a more efficient
way to compute the Hilbert class field
if the base field is imaginary quadratic~\cite{pari,stevenhagen-computationalcft}
or quartic CM~\cite{asuncionthesis}.
Class invariants yield a further speed-up
by lowering the required precision.

\subsection{Curves of genus two with prescribed Frobenius}
\label{ssec:cmmethod}

In this section we show how class invariants
give a practical improvement to the CM method
for constructing curves of genus two.
We start with a sketch of the CM method without class invariants
(\ref{sssec:cm1}).
Then we recall how class invariants are used in genus one
(\ref{sssec:cm2}).
Finally we explain how class invariants give
an improvement
in genus two (\ref{sssec:cm3}).

\subsubsection{The CM method}
\label{sssec:cm1}

We would like to construct a 
$g$-dimensional abelian variety over a finite field
with a prescribed characteristic polynomial
$f$ of the Frobenius endomorphism~$\pi$.
Indeed, when choosing $f$ appropriately,
this yields an abelian variety
with a prescribed number of points, or
with good cryptographic properties~\cite{HEHCC,freeman-stevenhagen-streng,spallek}.

The idea of the CM method is to take
an abelian variety $\widetilde{A}$ in characteristic zero with a nice
endomorphism ring~$\mathcal{O}$, and reduce it
modulo a prime.
The endomorphism ring of the reduction $A$
will contain both $\pi$ and~$\mathcal{O}$.
A `lack of space' in $\End(A)$
then relates $\pi$ to $\mathcal{O}$,
giving us the control that we need. 

In more detail, assuming for simplicity that~$f$ is an irreducible Weil polynomial
of degree~$2g$, this works
as follows.
The field $K=\QQ[X]/(f)$ is a CM field
of degree $2g$, the constant coefficient
$f(0) = p^{gm}$ is a prime power,
and the root $\pi_0=(X\ \mathrm{mod}\ f)$ is a
Weil $p^{m}$-number, that is, satisfies
$\pi_0\overline{\pi_0}=p^m$.
Let $\widetilde{A}$ be an abelian variety over a number field $k$
with $\mathrm{End}(\widetilde{A}_{\overline{k}})\cong \mathcal{O}_K$
of CM type~$\Phi$.
Assume $k\supset \reflexfield$, or equivalently,
that the endomorphisms of $\widetilde{A}$ over $\overline{k}$ are defined over~$k$.
Let $\mathfrak{P}/p$ be a prime
of~$k$.
Suppose  that$\widetilde{A}$ has good reduction at $\mathfrak{P}$ and let $A$
be the reduction.
Let $\pi\in\mathrm{End}(A)$ be the Frobenius
endomorphism of~$A$.
Reduction
modulo~$\mathfrak{P}$ gives an embedding $\mathcal{O}_K=\mathrm{End}(\widetilde{A})\subset \mathrm{End}(A)$
and we have the following result.
\begin{theorem}[{Shimura-Taniyama
formula~\cite[Thm.1 in \S13]{shimura-taniyama}}]\label{thm:shimura-taniyama}
The endomorphism $\pi$ is an element of the ring
$\mathcal{O}_K\subset\mathrm{End}(A)$
and generates the ideal
$N_{\reflextype}(N_{k/\reflexfield}(\mathfrak{P}))$
of~$\mathcal{O}_K$.
\end{theorem}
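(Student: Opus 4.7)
The plan is to prove the two assertions separately: first that the Frobenius $\pi$ lies in $\mathcal{O}_K\subset\mathrm{End}(\widetilde{A})$, and second that it generates the stated ideal.

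For the first assertion, I would exploit that reduction gives an injection $\mathcal{O}_K=\mathrm{End}(A)\hookrightarrow\mathrm{End}(\widetilde{A})$ whose image consists of endomorphisms defined over the residue field $\kappa$ of~$\mathfrak{P}$ (they come from endomorphisms over~$k$). The geometric Frobenius $\pi$ commutes with any $\kappa$-rational endomorphism, so $\pi$ lies in the centralizer $Z$ of $\mathcal{O}_K$ in $\mathrm{End}^0(\widetilde{A})$. Because $\Phi$ is primitive, $A$ is absolutely simple and $K$ is a maximal commutative subalgebra of $\mathrm{End}^0(A)$; the reduction $\widetilde{A}$ is isotypic, and a standard dimension count forces $Z=K$. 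Since $\pi$ is integral over $\ZZ$, we deduce $\pi\in\mathcal{O}_K$.

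For the ideal identity, the substantive content is a prime-by-prime calculation of $\mathrm{ord}_{\mathfrak{q}}(\pi)$ for each prime $\mathfrak{q}$ of $K$ above~$p$. The natural tool is the $\mathfrak{q}$-divisible group $\widetilde{A}[\mathfrak{q}^{\infty}]$: via the $\mathcal{O}_K$-action, the full $p$-divisible group $\widetilde{A}[p^\infty]$ decomposes as a product indexed by the primes of $K$ above~$p$, and each $\mathfrak{q}$-piece corresponds, via CM theory, to a subset of embeddings $\phi\colon K\hookrightarrow\overline{\QQ}_p$. On each piece Frobenius has valuation determined by whether the corresponding embedding lies in~$\Phi$ (i.e.\ contributes to the tangent space of the lift) or in its complex conjugate, yielding $(\pi)=\prod_{\phi\in\Phi}\phi^{-1}(\mathfrak{P}\cap\phi(\mathcal{O}_K))$-type factors. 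One then translates this product into the reflex norm statement by unwinding the defining property of $\reflextype$ (that $\phi\in\Phi$ is equivalent to some $\psi\in\reflextype$ with $\phi\circ\psi=\mathrm{id}$) and combining with the definition of $N_{\reflextype}$ on ideals recalled earlier, plus the fact that $N_{k/\reflexfield}(\mathfrak{P})$ collects the contributions along the valuations of~$k$ restricting to each prime of~$\reflexfield$.

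The main obstacle is the local valuation calculation on the $\mathfrak{q}$-divisible groups. In the ordinary case this follows cleanly from Serre--Tate deformation theory and the fact that the CM lift is the canonical lift, so the decomposition of $\widetilde{A}[p^\infty]$ into \'etale and multiplicative parts matches $\Phi$ versus $\overline{\Phi}$ exactly. In general the argument requires genuine Dieudonn\'e- or crystalline-theoretic input to identify the Newton polygon slopes with the cardinalities of the $\Phi$-subsets above each $\mathfrak{q}$; this is the actual Shimura--Taniyama formula and the content of \cite[Thm.~1 in \S13]{shimura-taniyama}. Once that local statement is in hand, the global identification with $N_{\reflextype}(N_{k/\reflexfield}(\mathfrak{P}))$ is purely formal manipulation of embeddings.
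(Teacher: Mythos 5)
The paper does not prove this theorem: it is stated as a black box, attributed to \cite[Thm.~1 in \S13]{shimura-taniyama}, and then applied in the subsequent discussion of the CM method. There is therefore no proof in the paper to compare yours against.

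Your sketch is a reasonable outline of the standard modern approach. The first half (deducing $\pi\in\mathcal{O}_K$ from the fact that Frobenius centralizes the reduced endomorphism ring, together with $K$ being a maximal commutative subalgebra --- the Tate module is free of rank~$1$ over $K\otimes\QQ_\ell$, so the centralizer of $K$ in $\mathrm{End}^0(\widetilde{A})$ is $K$ itself, and integrality gives $\pi\in\mathcal{O}_K$) is correct and fairly routine. However, the second half is where the genuine content lies, and you explicitly punt on it: the identification of the slopes of $\widetilde{A}[\mathfrak{q}^\infty]$ with the cardinalities of $\Phi$-subsets lying over each $\mathfrak{q}\mid p$ is, as you say, ``the actual Shimura--Taniyama formula,'' and you defer it to the very same reference the paper cites. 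So while nothing you wrote is wrong, you have not produced a proof of the theorem so much as a description of what a proof would need to establish, with the essential local computation outsourced. For what it is worth, Shimura and Taniyama's original argument is not via $p$-divisible groups or Dieudonn\'e theory at all: it proceeds by comparing the degree of Frobenius (computed from counting points and the tangent space decomposition under the CM-type $\Phi$) with norms of ideals, avoiding crystalline machinery entirely. Either route is legitimate, but in the context of this paper neither is required, since the result is simply cited.
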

This, together with the fact $\pi\overline{\pi}= \# (\mathcal{O}_k/\mathfrak{P})^g$
determines $\pi$ up to roots of unity.
In fact, by taking $k$ to be minimal, we get $\pi=\pi_0$ up to roots of unity,
that is, up to twists of~$A$.

This CM method can be made to be practical for
at least $g=1$ \cite{atkin-morain, bbel, sutherlandCRT},
$g=2$ \cite{spallek, vanwamelen, dupont, runtime}, and
$g=3$ \cite{weng-g3, koike-weng, bilv, lario-somoza, KLLRSS},
as well as for a certain class of curves with $g=5$ \cite{somoza-thesis}.

In all practical situations, 
one does not write down defining equations for
the characteristic-zero abelian variety~$\widetilde{A}$,
but only evaluates certain modular functions at~$\widetilde{A}$.
For example, for $g=1$, we take the $j$-invariant
and for $g=2$, we take a triple
of absolute Igusa invariants $i_1$, $i_2$, $i_3$.

In the case $g=1$, the elliptic curve $A$ can
be reconstructed from $j(A) = (j(\widetilde{A})\mod \mathfrak{P})$ by a 
textbook formula.
In the case $g=2$, for generic values of the Igusa invariants
modulo $\mathfrak{P}$, one can reconstruct $\widetilde{A}$
as the Jacobian of a hyperelliptic curve using
 Mestre's algorithm~\cite{mestre}. Similar constructions
 are used 
 for $g=3$ and $g=5$.

In the CM method for $g=1$, the value $j(\tau)$ is represented by
its minimal polynomial, the Hilbert class polynomial.
We reduce $j(\tau)$ modulo a prime by reducing
the Hilbert class polynomial modulo~$p$ and
taking a root of that in~$\overline{\FF_p}$.

In the case $g\geq 2$, we take a minimal polynomial~$H_{i_1}$
of the
first invariant $i_1(\widetilde{A})$ over~$\reflexfield$, and we represent
$i_2, \ldots, i_d$ by polynomials
$$ \widehat{H}_{i_1,i_n} = \sum_\gamma i_n(\widetilde{A})^\gamma \prod_{\sigma}
(X - i_n(\widetilde{A})^\sigma) \in \realreflex[X],$$
where sum and product range over $\mathrm{Gal}(\cmext{1}/\reflexfield)$
(see~\cite{ghkrw-2adic}).
Reducing~$H_{i_1}$ modulo a prime $\mathfrak{p}_0$ of $\realreflex$
and taking any root is equivalent to reducing $i_1(\widetilde{A})$ modulo
a prime over~$\mathfrak{p}_0$.
We can then find $i_2(A),\ldots, i_d(A)$
by computing
$$i_n(A) = \frac{\widehat{H}_{i_1,i_n}(i_1(A))}{H_{i_1}'(i_1(A))}$$
if $p$ is sufficiently large.

This is how the CM method works, and now we would like
to use class invariants for efficiency.

\subsubsection{Class invariants for genus one}
\label{sssec:cm2}

We now summarise the (standard) way
in which class invariants
are used in  the CM method in the case $g=1$.
Let $f\in\FcalN$ be a non-constant function
and let $\Phi_{f,j}(X,Y)\in\QQ[X,Y]$
be such that $\Phi_{f,j}(j, Y)\in \QQ(j)[Y]$ is a
(not necessarily monic)
minimal polynomial of
$f$ over $\Fcal{1}=\QQ(j)$.
Then we have $\Phi_{f,j}(j(\tau), f(\tau)) = 0$.
So given $f(\tau)$, we can
find $j(\tau)$ by solving for $X$ in $\Phi_{f,j}(X, f(\tau))=0$.

For the CM method, we compute the polynomial $\Phi_{f,j}$
and the minimal polynomial $H_f$
of a class invariant $f(\tau)$.
Here $\Phi_{f,j}$ can be reused as it depends only on~$f$,
and $H_f$ is much smaller than
the Hilbert class polynomial~$H_j$, hence needs less precision.
We compute $f(\tau)$ modulo a prime over $\mathfrak{P}$
by taking a root $f_0$ of $H_f$ modulo~$p$.
Then we solve for $X$ in 
$\Phi_{f,j}(X, f_0)=0$ to get $j(A)$.

\subsubsection{Class invariants in general}
\label{sssec:cm3}

For general~$g$, we give three methods for using class invariants.

\textbf{Using modular polynomials as in $g=1$.}
For $g\geq 2$, modular polynomials are much harder
to compute \cite{gruenewald-thesis, broker_lauter_modpol, martindale-modpol, milio-robert},
and the higher-dimensional analogue of solving
$\Phi_{f,j}(X, f_0)=0$ involves
Gr\"obner bases.
But for some choices of invariants this may be doable.

\textbf{A modular interpretation of the class invariants.}
Some class invariants themselves give rise to models
of curves or abelian varieties in a direct way, without the need of
invariants from $\Fcal{1}$.
For example, the modular functions $t$, $u$, $v\in\Fcal{8}$
of Section~\ref{sec:engethomeexample}
give rise to the curve
$y^2 = x(x-1)(x-t^2)(x-u)(x-v)$
without the intermediate step of Igusa invariants.

\textbf{Numerically expressing invariants in terms of the class invariant.}
Suppose that $i_1,\ldots, i_d$ are the invariants we need in order to
construct our curve or abelian variety.
We numerically compute $H_f$ and
$$ \widehat{H}_{f,i_n} = \sum_\gamma i_n(\widetilde{A})^\gamma \prod_{\sigma}
(X - f(\widetilde{A})^\sigma) \in \reflexfield[X].$$
These polynomials are in $\realreflex[X]$ if the conditions of Proposition~\ref{prop:complexconjugation}
are satisfied.
We find $f_0$ as a root of $H_d$ modulo~$p$,
and compute $i_n(A)$ for $n$ from it
by the formula
$$i_n(A) = \frac{\widehat{H}_{f,i_n}(f_0)}{H_{f}'(f_0)}.$$

We do need to compute $d+1$ polynomials instead of~$d$,
compared to when only using $i_1,\ldots,i_d$, but
as the size is dominated by the first invariant, which is now $f$
instead of $i_1$, the total size of the polynomials still goes down.

For the example from Sections~\ref{ssec:detailedexample}--\ref{sec:computingminpoly},
we computed the polynomials
$H_{f}$ and $\widehat{H}_{f,i_n}$ and made them available online
(close to line 200 of the file \verb!article.sage! of~\cite{cmcode}).
These four polynomials together take up 15\% less space than
the three polynomials $H_{i_1}$ and $\widehat{H}_{i_1,i_n}$.
More importantly, the largest coefficient (which determines the precision
at which theta constants need to be evaluated, the dominant step
in the computation)
is 40\% smaller.

  \bibliographystyle{plain}
\bibliography{bib,bib2}

\def\cprime{$'$}
\begin{thebibliography}{10}

\bibitem{jaredpaper1}
Jared Asuncion.
\newblock Computing the {H}ilbert class fields of quartic {CM} fields using
  complex multiplication.
\newblock preprint, \href{https://arxiv.org/abs/2104.13639}{arXiv:2104.13639},
  2021.

\bibitem{asuncionthesis}
Jared Asuncion.
\newblock {\em Complex multiplication constructions of abelian extensions of
  quartic fields}.
\newblock PhD thesis, Universit\'e de Bordeaux and Universiteit Leiden, 2022.
\newblock \url{https://hdl.handle.net/1887/3304503}.

\bibitem{atkin-morain}
A.~Oliver~L. Atkin and Fran{\c{c}}ois Morain.
\newblock Elliptic curves and primality proving.
\newblock {\em Math. Comp.}, 61(203):29--68, 1993.
\newblock \url{http://www.inria.fr/rrrt/rr-1256.html}.

\bibitem{bilv}
Jennifer~S. Balakrishnan, Sorina Ionica, Kristin Lauter, and Christelle
  Vincent.
\newblock Constructing genus-3 hyperelliptic {J}acobians with {CM}.
\newblock {\em LMS J. Comput. Math.}, 19(suppl. A):283--300, 2016.

\bibitem{bbel}
Juliana Belding, Reinier Br{\"o}ker, Andreas Enge, and Kristin Lauter.
\newblock Computing {H}ilbert class polynomials.
\newblock In {\em Algorithmic Number Theory -- ANTS-VIII (Banff, 2008)}, LNCS
  5011, pages 282--295. Springer, 2008.

\bibitem{birkenhake-lange}
Christina Birkenhake and Herbert Lange.
\newblock {\em Complex abelian varieties}, volume 302 of {\em Grundlehren der
  mathematischen Wissenschaften}.
\newblock Springer, second edition, 2004.

\bibitem{magma}
Wieb Bosma, John Cannon, and Catherine Playoust.
\newblock The {M}agma algebra system {I}: {T}he user language.
\newblock {\em J. Symbolic Comput.}, 24(3-4):235--265, 1997.
\newblock Computational algebra and number theory (London, 1993).

\bibitem{broker_lauter_modpol}
Reinier Bröker and Kristin Lauter.
\newblock Modular polynomials for genus 2.
\newblock {\em LMS Journal of Computation and Mathematics}, 12:326–339, 2009.

\bibitem{cohen}
Henri Cohen.
\newblock {\em A Course in Computational Algebraic Number Theory}, volume 138
  of {\em Graduate Texts in Mathematics}.
\newblock Springer, 1993.

\bibitem{cohen2}
Henri Cohen.
\newblock {\em Advanced topics in computational number theory}, volume 193 of
  {\em Graduate Texts in Mathematics}.
\newblock Springer-Verlag, New York, 2000.

\bibitem{HEHCC}
Henri Cohen, Gerhard Frey, Roberto Avanzi, Christophe Doche, Tanja Lange, Kim
  Nguyen, and Frederik Vercauteren, editors.
\newblock {\em Handbook of elliptic and hyperelliptic curve cryptography}.
\newblock Chapman \& Hall/CRC, Boca Raton, FL, 2006.

\bibitem{stevenhagen-computationalcft}
Henri Cohen and Peter Stevenhagen.
\newblock Computational class field theory.
\newblock In J.~Buhler and P.~Stevenhagen, editors, {\em Surveys in Algorithmic
  Number Theory}, volume~44 of {\em MSRI Publications}, pages 497 -- 534.
  Cambridge University Press, 2008.

\bibitem{cdly}
Craig Costello, Alyson Deines-Schartz, Kristin Lauter, and Tonghai Yang.
\newblock Constructing abelian surfaces for cryptography via {R}osenhain
  invariants.
\newblock {\em LMS J. Comput. Math.}, 17(suppl. A):157--180, 2014.

\bibitem{DHBHS}
Bernard Deconinck, Matthias Heil, Alexander Bobenko, Mark van Hoeij, and Marcus
  Schmies.
\newblock Computing {R}iemann theta functions.
\newblock {\em Math. Comp.}, 73(247):1417--1442, 2004.

\bibitem{dupont}
R{\'e}gis Dupont.
\newblock {\em Moyenne arithm{\'e}tico-g{\'e}om{\'e}trique, suites de
  {B}orchardt et applications}.
\newblock PhD thesis, {\'E}cole Polytechnique, 2006.
\newblock
  \url{http://www.lix.polytechnique.fr/Labo/Regis.Dupont/these_soutenance.pdf}.

\bibitem{enge-cm}
Andreas Enge.
\newblock {C}{M}.
\newblock software available at \url{http://www.multiprecision.org/cm/}.

\bibitem{enge-morain}
Andreas Enge and Fran{\c{c}}ois Morain.
\newblock Fast decomposition of polynomials with known {G}alois group.
\newblock In {\em Applied algebra, algebraic algorithms and error-correcting
  codes ({T}oulouse)}, LNCS 2643, pages 254--264. Springer, 2003.

\bibitem{enge-streng}
Andreas Enge and Marco Streng.
\newblock Schertz style class invariants for genus two, 2016.
\newblock preprint, \href{https://arxiv.org/abs/1610.04505}{arXiv:1610.04505}.

\bibitem{enge-thome}
Andreas Enge and Emmanuel Thom\'{e}.
\newblock Computing class polynomials for abelian surfaces.
\newblock {\em Exp. Math.}, 23(2):129--145, 2014.

\bibitem{freeman-stevenhagen-streng}
David Freeman, Peter Stevenhagen, and Marco Streng.
\newblock Abelian varieties with prescribed embedding degree.
\newblock In A.~J. van~der Poorten and A.~Stein, editors, {\em ANTS}, volume
  5011 of {\em Lecture Notes in Computer Science}, pages 60--73. Springer,
  2008.

\bibitem{ghkrw-2adic}
Pierrick Gaudry, Thomas Houtmann, David Kohel, Christophe Ritzenthaler, and
  Annegret Weng.
\newblock The 2-adic {CM} method for genus 2 curves with application to
  cryptography.
\newblock In {\em Advances in Cryptology -- ASIACRYPT 2006}, LNCS 4284, pages
  114--129. Springer, 2006.

\bibitem{MR1730432}
Alice Gee.
\newblock Class invariants by {S}himura's reciprocity law.
\newblock {\em J. Th\'eor. Nombres Bordeaux}, 11(1):45--72, 1999.
\newblock Les XX{\`e}mes Journ{\'e}es Arithm{\'e}tiques (Limoges, 1997).

\bibitem{gee-stevenhagen}
Alice Gee and Peter Stevenhagen.
\newblock Generating class fields using {S}himura reciprocity.
\newblock In {\em Algorithmic number theory ({P}ortland, {OR}, 1998)}, volume
  1423 of {\em Lecture Notes in Comput. Sci.}, pages 441--453. Springer,
  Berlin, 1998.

\bibitem{gruenewald-thesis}
David Gruenewald.
\newblock {\em Explicit Algorithms for Humbert Surfaces}.
\newblock PhD thesis, University of Sidney, 2009.
\newblock (3,3) modular polynomial at
  \url{http://www.maths.usyd.edu.au/u/davidg/thesis.html}.

\bibitem{hertogh}
Math{\'e} Hertogh.
\newblock Computing with ad{\'e}les and id{\'e}les.
\newblock MSc thesis, Universiteit Leiden,
  \url{https://hdl.handle.net/1887/3249353}, 2021.

\bibitem{hertoghcode}
Math{\'e} Hertogh.
\newblock Computing with ad{\'e}les and id{\'e}les.
\newblock SageMath code, \url{https://github.com/mathehertogh/adeles}, 2021.

\bibitem{igusa}
Jun-Ichi Igusa.
\newblock Arithmetic variety of moduli for genus two.
\newblock {\em Annals of Mathematics}, 72(3):612--649, 1960.

\bibitem{kilicerstreng}
P\i~nar K\i~l\i \c{c}er and Marco Streng.
\newblock The {CM} class number one problem for curves of genus 2.
\newblock {\em Res. Number Theory}, 9(1):Paper No. 15, 29, 2023.

\bibitem{Genusthreereduction}
P\i{}nar K\i{}l\i{}\c{c}er.
\newblock Reduction of period matrices in genus 3.
\newblock
  \url{https://bitbucket.org/pkilicer/period-matrices-for-genus-3-cm-curves/}.

\bibitem{KLLRSS}
P\i{}nar K\i{}l\i{}\c{c}er, Hugo Labrande, Reynald Lercier, Christophe
  Ritzenthaler, Jeroen Sijsling, and Marco Streng.
\newblock Plane quartics over {$\mathbb{Q}$} with complex multiplication.
\newblock {\em Acta Arith.}, 185(2):127--156, 2018.

\bibitem{koecher}
Max Koecher.
\newblock Zur {T}heorie der {M}odulformen {$n$}-ten {G}rades. {I}.
\newblock {\em Math. Z.}, 59:399--416, 1954.

\bibitem{koike-weng}
Kenji Koike and Annegret Weng.
\newblock Construction of {CM} {P}icard curves.
\newblock {\em Mathematics of Computation}, 74:499--518, 2004.

\bibitem{lang-cm}
Serge Lang.
\newblock {\em Complex Multiplication}, volume 255 of {\em Grundlehren der
  mathematischen Wissenschaften}.
\newblock Springer, 1983.

\bibitem{lario-somoza}
Joan-C. Lario and Anna Somoza.
\newblock An inverse {J}acobian algorithm for {P}icard curves (with an appendix
  by {C}hristelle {V}incent).
\newblock {\em Res. Number Theory}, 7(2):32, 2021.
\newblock \href{https://arxiv.org/abs/1611.02582}{arXiv:1611.02582}.

\bibitem{lattices}
Hendrik~W. Lenstra, Jr.
\newblock Lattices.
\newblock In J.~Buhler and P.~Stevenhagen, editors, {\em Surveys in Algorithmic
  Number Theory}, volume~44 of {\em MSRI Publications}, pages 127 -- 181.
  Cambridge, 2008.

\bibitem{martindale-modpol}
Chloe Martindale.
\newblock Hilbert modular polynomials.
\newblock {\em J. Number Theory}, 213:464--498, 2020.

\bibitem{mestre}
Jean-Fran{\c{c}}ois Mestre.
\newblock Construction de courbes de genre {$2$} {\`{a}} partir de leurs
  modules.
\newblock In {\em Effective methods in algebraic geometry ({C}astiglioncello,
  1990)}, volume~94 of {\em Progr. Math.}, pages 313--334. Birkh\"auser, 1991.

\bibitem{milio-robert}
Enea Milio and Damien Robert.
\newblock Modular polynomials on {H}ilbert surfaces.
\newblock {\em J. Number Theory}, 216:403--459, 2020.
\newblock \url{https://hal.archives-ouvertes.fr/hal-01520262v2}.

\bibitem{newman}
Morris Newman.
\newblock {\em Integral matrices}.
\newblock Academic Press, 1972.
\newblock Pure and Applied Mathematics, Vol. 45.

\bibitem{schertz}
Reinhard Schertz.
\newblock Weber's class invariants revisited.
\newblock {\em Journal de Th\'eorie des Nombres de Bordeaux}, 14(1):325--343,
  2002.

\bibitem{shimura-models-I}
Goro Shimura.
\newblock On canonical models of bounded symmetric domains {I}.
\newblock {\em Ann of Math}, 91:144--222, 1970.

\bibitem{shimura-models-II}
Goro Shimura.
\newblock On canonical models of bounded symmetric domains {II}.
\newblock {\em Ann of Math}, 92:528--549, 1970.

\bibitem{shimura-arithmetic}
Goro Shimura.
\newblock On some arithmetic properties of modular forms of one and several
  variables.
\newblock {\em Ann. of Math. (2)}, 102(3):491--515, 1975.

\bibitem{shimura-fourier}
Goro Shimura.
\newblock On the {F}ourier coefficients of modular forms of several variables.
\newblock {\em G{\"o}ttingen Nachr. Akad. Wiss.}, pages 261--268, 1975.

\bibitem{shimura-theta-cm}
Goro Shimura.
\newblock Theta functions with complex multiplication.
\newblock {\em Duke Mathematical Journal}, (4):673--696, 1976.

\bibitem{shimura-onabelian}
Goro Shimura.
\newblock On abelian varieties with complex multiplication.
\newblock {\em Proc. London Math. Soc. (3)}, 34(1):65--86, 1977.

\bibitem{shimura-reciprocity-theta}
Goro Shimura.
\newblock On certain reciprocity-laws for theta functions and modular forms.
\newblock {\em Acta Math.}, 141(1-2):35--71, 1978.

\bibitem{shimuraAF}
Goro Shimura.
\newblock {\em Introduction to the Arithmetic Theory of Automorphic Functions}.
\newblock Princeton University Press, 1994.

\bibitem{shimura}
Goro Shimura.
\newblock {\em Abelian Varieties with Complex Multiplication and Modular
  Functions}.
\newblock Princeton University Press, 1998.
\newblock Sections 1--16 essentially appeared before in
  \cite{shimura-taniyama}.

\bibitem{shimura-taniyama}
Goro Shimura and Yutaka Taniyama.
\newblock {\em Complex multiplication of abelian varieties and its applications
  to number theory}, volume~6 of {\em Publications of the Mathematical Society
  of Japan}.
\newblock 1961.

\bibitem{somoza-thesis}
Anna Somoza.
\newblock PhD thesis, Universitat Polit{\`e}cnica de Catalunya and Universiteit
  Leiden, 2019.
\newblock Inverse Jacobian and related topics for certain superelliptic curves.

\bibitem{sotakovamsc}
Jana Sot\'akov\'a.
\newblock Eta quotients of class fields of imaginary quadratic fields.
\newblock MSc thesis, Universiteit Leiden and Universit\"at Regensburg,
  \url{https://hdl.handle.net/1887/3597051}, 2017.

\bibitem{spallek}
Anne-Monika Spallek.
\newblock {\em Kurven vom {G}eschlecht {$2$} und ihre {A}nwendung in
  {P}ublic-{K}ey-{K}ryptosystemen}.
\newblock PhD thesis, Institut f{\"u}r Experimentelle Mathematik,
  Universit{\"a}t GH Essen, 1994.
\newblock {\url{http://www.iem.uni-due.de/zahlentheorie/AES-KG2.pdf}}.

\bibitem{MR563924}
Harold~M. Stark.
\newblock {$L$}-functions at {$s=1$}. {IV}. {F}irst derivatives at {$s=0$}.
\newblock {\em Adv. in Math.}, 35(3):197--235, 1980.

\bibitem{sage}
William~A. Stein et~al.
\newblock {\em {S}age {M}athematics {S}oftware ({V}ersion 8.6)}.
\newblock The Sage Development Team, 2019.
\newblock \url{http://www.sagemath.org}.

\bibitem{phdthesis}
Marco Streng.
\newblock {\em Complex multiplication of abelian surfaces}.
\newblock PhD thesis, Universiteit Leiden, 2010.
\newblock \url{http://hdl.handle.net/1887/15572}.

\bibitem{cmcode}
Marco Streng.
\newblock Recip, 2011--2023.
\newblock {RE}pository of {C}omplex mult{IP}lication {S}age{M}ath code,
  formerly package for using {S}himura's {RECIP}rocity law, version TODO
  \url{https://bitbucket.org/mstreng/recip/}.

\bibitem{runtime}
Marco Streng.
\newblock Computing {I}gusa class polynomials.
\newblock {\em Math. Comp.}, 83:275--309, 2014.
\newblock \href{http://arxiv.org/abs/0903.4766}{arXiv:0903.4766}.

\bibitem{sutherlandCRT}
Andrew~V. Sutherland.
\newblock Computing {H}ilbert class polynomials with the {C}hinese remainder
  theorem.
\newblock {\em Math. Comp.}, 80(273):501--538, 2011.

\bibitem{sutherlandclassinv}
Andrew~V. Sutherland.
\newblock Accelerating the {CM} method.
\newblock {\em LMS J. Comput. Math.}, 15:172--204, 2012.

\bibitem{pari}
{The PARI~Group}, Bordeaux.
\newblock {\em {PARI/GP, version {\tt 2.11.1}}}, 2018.
\newblock available from \url{http://pari.math.u-bordeaux.fr/}.

\bibitem{vanwamelen}
Paul van Wamelen.
\newblock Examples of genus two {CM} curves defined over the rationals.
\newblock {\em Math. Comp.}, 68(225):307--320, 1999.

\bibitem{weber3}
Heinrich Weber.
\newblock {\em Algebraische Zahlen}, volume~3 of {\em Lehrbuch der Algebra}.
\newblock Friedrich Vieweg, 1908.

\bibitem{weng-g3}
Annegret Weng.
\newblock Hyperelliptic {CM}-curves of genus 3.
\newblock {\em Journal of the Ramanujan Mathematical Society}, 16(4):339--372,
  2001.

\bibitem{yang-shimura}
Tonghai Yang.
\newblock Rational structure of {$X(N)$} over {$\Bbb{Q}$} and explicit {G}alois
  action on {CM} points.
\newblock {\em Chin. Ann. Math. Ser. B}, 37(6):821--832, 2016.

\end{thebibliography}

\end{document}